\documentclass[reqno]{amsart}
\usepackage{amssymb, amstext, amscd, amsmath,verbatim,lineno, mathtools}
\usepackage{enumitem}
\usepackage[all,cmtip]{xy}
\usepackage{epigraph}
\usepackage{etoolbox}
\usepackage{comment}
\usepackage{latexsym}
\usepackage{amsthm}
\usepackage{geometry}
\usepackage{color}
\usepackage[all]{xy}
\usepackage{xcolor}

\usepackage{dsfont}

\usepackage[backend=biber,style=alphabetic, giveninits, maxalphanames=99, maxbibnames=99, maxcitenames=99,  sorting=anyt]{biblatex} 
\AtEveryBibitem{%
  \clearfield{issn} % Remove issn
  \clearfield{doi} % Remove doi

  \ifentrytype{online}{}{% Remove url except for @online
    \clearfield{url}
    }
    }
\usepackage{hyperref}
\hypersetup{colorlinks=true, linkcolor={blue}, citecolor={blue}, urlcolor={black}}
\def\V{\mathcal{V}}
\def\til{\tilde{1}}

\DeclareMathOperator{\ls}{span}

\def\source{\mathbf{s}}
\def\range{\mathbf{r}}

\def\1{1}

\usepackage{aliascnt}
\usepackage[capitalize, noabbrev]{cleveref}
\newtheorem{thm}{Theorem}[section]
\crefname{thm}{Theorem}{Theorems}

\newaliascnt{lem}{thm}
\newtheorem{lem}[lem]{Lemma}
\aliascntresetthe{lem}
\crefname{lem}{Lemma}{Lemmas}

\newaliascnt{prop}{thm}
\newtheorem{prop}[prop]{Proposition}
\aliascntresetthe{prop}
\crefname{prop}{Proposition}{Propositions}

\newaliascnt{cor}{thm}
\newtheorem{corollary}[cor]{Corollary}
\aliascntresetthe{cor}
\crefname{cor}{Corollary}{Corollary}

\newaliascnt{defn}{thm}
\theoremstyle{definition}
\newtheorem{defn}[defn]{Definition}
\aliascntresetthe{defn}
\crefname{defn}{Definition}{Definitions}

\newaliascnt{rmk}{thm}
\theoremstyle{remark}
\newtheorem{rmk}[rmk]{Remark}
\aliascntresetthe{rmk}
\crefname{rmk}{Remark}{Remarks}

\newaliascnt{example}{thm}
\newtheorem{example}[example]{Example}
\aliascntresetthe{example}
\crefname{example}{Example}{Examples}

\numberwithin{equation}{section}
\newcommand\pref[1]{(\ref{#1})} %-- without putting parentheses on labels

\usepackage{enumitem}

\def\twist{(\Sigma, i,q)}

\def\id{\epsilon}

\def\DTS{A_R(G;\Sigma)}

\def\up{^{\uparrow}}

\DeclareMathOperator{\spp}{supp}
\DeclareMathOperator{\spg}{supp_G}

\def\cob{\Sigma_{\operatorname{cob}}}

\begin{document}

\title{Cartan inverse semigroups and twisted Steinberg algebras}
\author[Brownlowe]{Nathan Brownlowe}
\author[Clark]{Lisa Orloff Clark}
\author[Lin]{Ying-Fen Lin}
\author[Naingue]{Lynnel D. Naingue}

\thanks{The first-named author was partially supported by Australian Research Council Discovery Project DP250100297. The third- and fourth-named authors would like to thank the Mathematical Research Institute (MATRIX), and to Queen's University Belfast and Mindanao State University-Iligan Institute of Technology, respectively, for providing travel support that facilitated the research collaboration for this work. They also extend their thanks to Victoria University of Wellington for the hospitality during their visits to the second-named author; the fourth-named author would like to thank also the first-named author and The University of Sydney for their hospitality during her visits.}

\address[N. Brownlowe]{School of Mathematics and Statistics, University of Sydney, NSW 2006, Australia}
\email{\href{mailto:nathan.brownlowe@sydney.edu.au}{nathan.brownlowe@sydney.edu.au}}

\address[L.O. Clark]{School of Mathematics and Statistics, Victoria University of Wellington, PO Box 600, Wellington 6140, New Zealand}
\email{\href{mailto:lisa.orloffclark@vuw.ac.nz}{lisa.orloffclark@vuw.ac.nz}}

\address[Y.-F. Lin]{Mathematical Sciences Research Centre, Queen's University Belfast, Belfast, BT7 1NN, United Kingdom}
\email{\href{mailto:y.lin@qub.ac.uk}{y.lin@qub.ac.uk}}

\address[L.D. Naingue]{Department of Mathematics and Statistics, College of Science and Mathematics, MSU-Iligan Institute of Technology, Iligan City 9200 Philippines }
\email{\href{mailto:lynnel.naingue@g.msuiit.edu.ph}{lynnel.naingue@g.msuiit.edu.ph}}

\begin{abstract}
We introduce the notion of a Cartan inverse semigroup and an (algebraic) semi-Cartan subalgebra in an $R$-algebra $A$, providing a purely algebraic analogue of Cartan semigroups and semi-Cartan subalgebras in a C*-algebra. This extends the theory of quasi-Cartan subalgebras, shifting the focus from designated subalgebras to designated subsemigroups. Our main result characterises twisted Steinberg algebras in terms of the existence of a Cartan inverse semigroup: we show that an $R$-algebra admits a Cartan inverse semigroup if and only if it is isomorphic to a twisted Steinberg algebra. Moreover, when our construction is applied to a twisted Steinberg algebra over an ample Hausdorff groupoid, the constructed twist is naturally isomorphic to the original one. 
\end{abstract}

\maketitle
\section{Introduction}

Semi-Cartan subalgebras, introduced in \cite{BCLM}, arise naturally in the study of C*-algebras. They are a generalisation of the Cartan and diagonal subalgebras studied by Renault \cite{Renault} and Kumjian \cite{Kumjian}, retaining the internal symmetry of the Cartan/diagonal setting while encompassing substantially broader classes of examples, including all group C*-algebras (full, reduced, and exotic). A central feature of this theory is the connection between algebraic structure and dynamical models, bridged by semigroups of normalisers and associated twisted groupoid constructions. 

The purpose of this paper is to develop an algebraic analogue of this theory. We show that many of the structural features of semi-Cartan subalgebras do not fundamentally depend on analytic properties, but instead arise from more basic algebraic phenomena. Passing to the algebraic setting has both advantages and challenges: while certain constructions become more flexible, one loses many of the powerful tools available in the C*-algebraic setting.     

A number of algebraic approaches to Cartan-type reconstructions have been developed, see, for example, \cite{ACCCLMRSS,BiceClark, bicejoneskud}.
Our approach is different from these and applies more generally.  Like \cite{BCLM}, we replace the role played by a distinguished subalgebra with that of a distinguished semigroup. More precisely, we consider pairs $(A,N)$, where $A$ is an $R$-algebra and $N$ is an inverse semigroup, which we call \emph{Cartan inverse semigroup}, contained in $A$ that encodes the relevant symmetry of the algebra, see \cref{defn: cis}. This framework extends the theory of quasi-Cartan subalgebras developed in \cite{ACCCLMRSS} (see \cref{prop:twoNs}) and also parallels the C*-algebraic theory of semi-Cartan subalgebras.  Importantly, $N$ is not required to be the full semigroup of normalisers, but will turn out to be a proper subsemigroup, see \cref{prop:N-in-normalisers}.

Our main result \cref{thm:reconstruct} gives an intrinsic characterisation of twisted Steinberg algebras in terms of this structure: an $R$-algebra $A$ admits a Cartan inverse semigroup if and only if $A$ is isomorphic to a twisted Steinberg algebra. Our work combines ideas from both the algebraic theory developed in \cite{ACCCLMRSS} and the C*-algebraic theory developed in \cite{BCLM}; the interaction between these perspectives is essential.  Our setup uses ideas from \cite{BCLM} and then, with everything in place, the proof of our main theorem uses techniques of \cite{ACCCLMRSS}.

In \cref{prelim}, we recall the relevant preliminaries from \cite{ACCCLMRSS} and establish a new characterisation of discrete twists in \cref{prop:twistdefTristan}. We introduce Cartan inverse semigroups and semi-Cartan subalgebras in \cref{sec:semicartan}, and investigate properties of the associated \emph{stable conditional expectations} in \cref{P}. In particular we show that the conditional expectation is unique (\cref{Punique}) and automatically faithful (\cref{Pfaithful}). In \cref{sec:main}, we prove our main reconstruction theorem (\cref{thm:reconstruct}). Finally, in \cref{sec:recovery}, we establish the recovery result (\cref{thm: recovering}), showing that applying our construction to a twisted Steinberg algebra recovers the original twist up to isomorphism.

\bigskip

\section{Preliminaries}
\label{prelim}
\subsection{Algebraic Pairs}
We recall some of the basic definitions from \cite{ACCCLMRSS}.  
Throughout this article, $R$ denotes a commutative unital ring, $R^{\times}$ is the group of units of $R$, and $A$ is an $R$-algebra.   Our main results also require $R$ to be indecomposable so that the only idempotents in $R$ are 0 and 1.  This gives us a better handle on the idempotents in our $R$-algebras. 

A commutative subalgebra $B\subseteq A$, with idempotents $I(B)$, is said to satisfy the \textit{without-torsion} condition with respect to $R$ if the following holds:
\begin{equation}
    \text{if~} e \in I(B) \setminus \{0\} \text{~and~} t\in R \text{~satisfy~} te=0, \text{~then~} t=0. \tag{WT}\label{WT}
\end{equation}
\hyperref[WT]{Condition \pref{WT}} holds automatically if $R$ is a field. The set $I(B)$ is said to form a set of \emph{local units} for $A$ if for any $\{a_1, \ldots, a_k\} \subseteq A$, there exists $e\in I(B)$ with $ea_i=a_i=a_ie$ for all $i\in \{1,\ldots, k\}$. An element $n\in A$ is a \emph{normaliser} of $B$ in $A$ if there exists $m\in A$ such that 
\begin{equation}\label{normaliser}
    mnm=m, ~ nmn=n, ~ \text{~and~} mBn \cup nBm \subseteq B,
\end{equation} and we denote the collection of normalisers of $B$ by $N(B)$. Whenever $I(B)$ forms a set of local units for $A$, the set of normalisers $N(B)$ is an inverse semigroup such that the set of idempotents in $N(B)$ equals $I(B)$ (see \cite[Lemma~2.15]{ACCCLMRSS}).

Let $A$ be an $R$-algebra, and let $B \subseteq A$ be a commutative subalgebra satisfying \hyperref[WT]{Condition \pref{WT}}.  We call a map $P:A \to B$ a \textit{conditional expectation} if $P$ is $R$-linear; $P|_{B} = \mathrm{id}_B$; and $P(bab') = bP(a)b'$ for $a \in A$ and $b,b' \in B$.
In addition, suppose:
	\begin{enumerate}[label=(\roman*), font=\normalfont]
		\item \label{paircon1}The set $I(B)$ forms a set of local units for $A$;
		\item $B=\ls_R(I(B))$;
		\item $A=\ls_R(N(B))$;
		\item \label{paircon4} There exists a \textit{faithful} conditional expectation $P:A \to B$, in the sense that for every $a \in A\setminus \{0\}$, there exists $n \in N(B)$ such that $P(na) \neq 0$. 
	\end{enumerate}
	Then the pair $(A,B)$ is called an \textit{algebraic quasi-Cartan pair} if the conditional expectation $P:A \to B$ is \textit{implemented by idempotents}, in the sense that for every $n\in N(B)$, there exists $e\in I(B)$ such that $P(n)=ne=en$.

%%%%%%%%%%%%%%%%
\subsection{Discrete twists over ample groupoids}

A \textit{topological groupoid} is a groupoid endowed with a topology such that inversion and composition are continuous.  An open set $D$ in a topological groupoid $G$ is an \textit{open bisection} if $\range(D)$ and $\source(D)$ are open, where $\range$ and $\source$ are the range and source map of the groupoid, and $\range|_D$ and $\source|_D$ are homeomorphisms onto their images. The groupoid $G$ is \textit{\'{e}tale} if $\range$ (or equivalently, $\source$) is a local homeomorphism. If $G$ is \'{e}tale and Hausdorff,  then the unit space $G^{(0)}$ is open and closed in $G$.  Also, $G$ is \'{e}tale if and only if $G$ has a basis of open bisections. We say that $G$ is \textit{ample} if it has a basis of compact open bisections. 
If $K$ and $D$ are  compact open bisections of an ample groupoid, then $K^{-1}$ and $KD$ are also compact open bisections. Under these operations, the collection of compact open bisections is an inverse semigroup \cite[Proposition~2.2.4]{Paterson}.

Given groupoids $G$ and $H$, a map $\psi: G \to H$ is called a  \emph{groupoid homomorphism} if $(\psi \times \psi)(G^{(2)}) \subseteq H^{(2)}$ and $\psi(\alpha\beta)=\psi(\alpha)\psi(\beta)$ for all $(\alpha,\beta) \in G^{(2)}$. In addition, if $\psi$ is a homeomorphism and $(\psi \times\psi)(G^{(2)}) \supseteq H^{(2)}$, then $\psi$ is called a \emph{groupoid isomorphism}. 

Next, we recall the definition of a discrete $R$-twist over an ample groupoid. We note that a discrete twist can be defined over an \'{e}tale groupoid, and one can consider a subgroup $T \leq R^{\times}$ instead of $R^{\times}$, see \cite[Definition~4.1]{TSA2022}.
A \textit{discrete $R$-twist} over an ample groupoid $G$ is a sequence 
		\[
	\xymatrix@1{
		G^{(0)} \times R^{\times} \ar[r]^-{i} ~ &~ \Sigma \ar[r]^{q}  &~ G 
	}
	\]
where the groupoid $G^{(0)} \times R^\times$ is regarded as a trivial group bundle with fibres $R^\times$, $\Sigma$ is a topological groupoid with $\Sigma^{(0)}=i(G^{(0)} \times \{1\})$, and $i$ and $q$ are continuous groupoid homomorphisms that restrict to homeomorphisms of unit spaces, such that the following conditions hold:
\begin{enumerate}
    \item[(DT1)] The sequence is \textit{exact}, in the sense that $i(\{x\} \times R^\times)= q^{-1}(x)$ for every $x \in G^{(0)}$, $i$ is injective, and $q$ is surjective. 
    \item[(DT2)] \label{DT2}The groupoid $\Sigma$ is a \textit{locally trivial $G$-bundle}, in the sense that for each $\alpha\in G$, there is an open bisection $D_\alpha$ of $G$ containing $\alpha$ and a continuous map $P_\alpha: D_\alpha \to \Sigma$ such that
    \begin{enumerate}[label=(\roman*), font=\normalfont] 
        \item \label{nD1}$q \circ P_\alpha = \operatorname{id}_{D_\alpha}$,
        \item \label{nD2}the map $(\beta, t) \mapsto i(\range(\beta), t)P_\alpha(\beta)$ is a homeomorphism from $D_\alpha \times R^\times$ to $q^{-1}(D_\alpha)$. 
    \end{enumerate}
    \item[(DT3)] The image of $i$ is \textit{central} in $\Sigma$, in the sense that $i(\range(\sigma), t)\sigma= \sigma i(\source(\sigma),t)$ for all $\sigma\in \Sigma$ and $t \in R^\times$.
\end{enumerate}

We denote a discrete $R$-twist over $G$ by $(\Sigma,i,q)$, or by $\Sigma \twoheadrightarrow G$, or simply by $\Sigma$.  
We identify $\Sigma^{(0)}$ with $G^{(0)}$ via $i$ (or via $q|_{\Sigma^{(0)}}$). Recall from \cite[Page~15]{TSA2022} that there is a free continuous action of $R^\times$ on $\Sigma$ given by 
\begin{equation}\label{eq:action}
    t \cdot \sigma= i(\range(\sigma),t)\sigma \quad \text{for all $t\in R^\times$ and $\sigma\in \Sigma$.}
\end{equation} 

Note that \cite[Definition~4.1]{TSA2022} requires $G$ and $\Sigma$ to be Hausdorff, but \cite[Corollary~2.3]{ACCCLMRSS} shows that $\Sigma$ is Hausdorff whenever $G$ is Hausdorff. Moreover, $\Sigma$ is ample whenever $G$ is ample. 
A continuous map $P_\alpha: D_\alpha \to \Sigma$ is called a (\textit{continuous}) \textit{local section} if it satisfies condition \hyperref[DT2]{(DT2)}. If $G$ is ample, then the open bisections in condition \hyperref[DT2]{(DT2)} can be chosen to be compact. We say that two discrete $R$-twists $\twist$ and $(\Sigma',i',q')$ over Hausdorff ample groupoids $G$ and $G'$, respectively, are \emph{isomorphic} if there exist groupoid isomorphisms $\psi_{\Sigma}: \Sigma \to \Sigma'$ and $\psi_G: G \to G'$ such that the following diagram commutes. 

    \begin{equation*}
        \xymatrix@1{
		G^{(0)} \times R^{\times} \ar[r]^-{i}  \ar[d]_{\psi_G|_{G^{(0)}} \times \mathrm{id}}~ &~ \Sigma \ar[r]^{q} \ar[d]^{\psi_{\Sigma}}  & G \ar[d]^{\psi_G~~} \\
		G'^{(0)} \times R^{\times}  \ar[r]^-{i'} &\Sigma'  \ar[r]^-{q'} & G'}
    \end{equation*}

\bigskip

Similar to \cite[Definition~2.1]{BCLM}, the following proposition presents an equivalent definition of a discrete twist where instead of starting with both $i$ and $q$, we start with $q$ and an action of $R^{\times}$ on $\Sigma$.  We note that condition \ref{it1:newtwist} of \cref{prop:twistdefTristan} below is not needed in the C*-algebraic setting of non-discrete twists where $R^{\times}$ is replaced by the complex unit circle $\mathbb{T}$ with the standard topology.  The compactness of $\mathbb{T}$ makes \ref{it1:newtwist} automatic, see the proof of \cite[Proposition~2.4]{BCLM}.

\begin{prop}\label{prop:twistdefTristan}
Let $\Sigma$ be a topological groupoid and let $G$ be an ample Hausdorff groupoid. Then $(\Sigma,i,q)$ is a discrete $R$-twist over $G$ if and only if the following conditions hold:
\begin{enumerate}[label=(\arabic*), font=\normalfont]
\item\label{it1:newtwist} $\Sigma^{(0)}$ is open in $\Sigma$.
\item\label{it2:newtwist} There exists a free, continuous action of $R^{\times}$ on $\Sigma$ such that for all $(\sigma, \nu) \in \Sigma^{(2)}$ and $t \in R^{\times}$,
\begin{equation}
\label{eq:actionassoc}t\cdot(\sigma \nu) = (t\cdot \sigma) \nu=\sigma(t\cdot\nu).
\end{equation}
\item\label{it3:newtwist} There exists 
a continuous, open, surjective groupoid homomorphism 
$q: \Sigma \to G$ such that $R^{\times}$ acts transitively on each fibre, that is,
$q^{-1}(q(\sigma))= R^{\times}\cdot\sigma$ for all $\sigma\in\Sigma$.
\item \label{it4:newtwist}The map $i:G^{(0)} \times R^{\times}  \to \Sigma$ is defined such that for each $t \in R^{\times}$ and $x \in G^{(0)}$,
\begin{equation}\label{eq:idef}
i(x,t) = t\cdot q|_{\Sigma^{(0)}}^{-1}(x).
\end{equation}
\end{enumerate}
\end{prop}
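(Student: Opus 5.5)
We prove the two directions separately. The reverse direction carries the real content: building local sections from openness of $q$ and openness of $\Sigma^{(0)}$.

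\emph{Forward direction.} Suppose $(\Sigma,i,q)$ is a discrete $R$-twist, and use the action \eqref{eq:action}, $t\cdot\sigma=i(\range(\sigma),t)\sigma$.
\begin{enumerate}[label=(\alph*)]
\item \emph{Action axioms.} Continuity comes from continuity of $i$, $\range$ and multiplication. It is an action because $i$ is a homomorphism into the bundle $G^{(0)}\times R^\times$.
\item \emph{Freeness.} If $t\cdot\sigma=\sigma$, cancel $\sigma$ to get $i(\range(\sigma),t)=\range(\sigma)=i(\range(\sigma),1)$. Injectivity of $i$ then gives $t=1$.
\item \emph{Identity \eqref{eq:actionassoc}.} The first equality is associativity. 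The second follows from centrality (DT3), since $i(\source(\sigma),t)=i(\range(\nu),t)$.
\item \emph{Condition \ref{it3:newtwist}.} Transitivity on fibres follows from exactness (DT1): if $q(\sigma)=q(\sigma')$, then $q(\sigma'\sigma^{-1})=\range(q(\sigma))$ is a unit, so $\sigma'\sigma^{-1}=i(x,t)$ for some $x,t$. Openness of $q$ follows from (DT2): on $q^{-1}(D_\alpha)\cong D_\alpha\times R^\times$, the map $q$ is the coordinate projection, which is open.
\item \emph{Condition \ref{it1:newtwist}.} We have $\Sigma^{(0)}=q^{-1}(G^{(0)})\cap\{\sigma: \sigma=i(\range(\sigma),1)\}$. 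Locally, under the trivialisation of (DT2) with $D_\alpha\subseteq G^{(0)}$, this set is $D_\alpha\times\{1\}$. This is open because $R^\times$ carries the discrete topology, and $G^{(0)}$ is open since $G$ is étale.
\item \emph{Condition \ref{it4:newtwist}.} Formula \eqref{eq:idef} holds because $q|_{\Sigma^{(0)}}^{-1}(x)=i(x,1)$.
\end{enumerate}

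\emph{Reverse direction.} Define $i$ by \eqref{eq:idef}.
\begin{enumerate}[label=(\alph*)]
\item \emph{$i$ is a homomorphism and $\Sigma^{(0)}=i(G^{(0)}\times\{1\})$.} For $u\in\Sigma^{(0)}$, \eqref{eq:actionassoc} gives $(t\cdot u)(s\cdot u)=ts\cdot(uu)=ts\cdot u$. Continuity of $i$ requires continuity of $q|_{\Sigma^{(0)}}^{-1}$. Since $q$ is open and $\Sigma^{(0)}$ is open, $q|_{\Sigma^{(0)}}$ is an open continuous bijection onto $G^{(0)}$, hence a homeomorphism. This is the first place condition \ref{it1:newtwist} is used. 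Injectivity of $q|_{\Sigma^{(0)}}$ holds because a homomorphism restricted to units is injective here: if $q(u)=q(v)$, then $v=t\cdot u$, and $v=vv=t^2\cdot u$ forces $t=1$ by freeness.
\item \emph{Range and source of $t\cdot\sigma$.} By \eqref{eq:actionassoc}, $t\cdot\sigma=(t\cdot\range(\sigma))\sigma=i(\range(\sigma),t)\sigma$, which matches \eqref{eq:action}. From \eqref{eq:actionassoc}, $(t\cdot\sigma)^{-1}=t^{-1}\cdot\sigma^{-1}$, so $\range$ and $\source$ are invariant under the action.
\item \emph{Axioms (DT1) and (DT3).} Injectivity of $i$ follows from freeness. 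For exactness, $q^{-1}(x)=R^\times\cdot q|_{\Sigma^{(0)}}^{-1}(x)=i(\{x\}\times R^\times)$. Centrality is the second equality in \eqref{eq:actionassoc} applied with $\nu=\source(\sigma)$.
\item \emph{Axiom (DT2).} Fix $\alpha\in G$ and $\sigma_0\in q^{-1}(\alpha)$. Choose an open bisection $U\ni\sigma_0$ of $\Sigma$; this is possible because $\Sigma$ is étale: $\range$ is open, since $\range=\range_G\circ q$ composed with the homeomorphism $q|_{\Sigma^{(0)}}^{-1}$. Each fibre $R^\times\cdot\sigma$ is discrete, so by shrinking $U$ we may assume $q|_U$ is injective. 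Then $q|_U$ is an open continuous injection, so $D_\alpha:=q(U)$ is open and $P_\alpha:=(q|_U)^{-1}$ is continuous. Shrink to a compact open bisection using ampleness of $G$.
\item \emph{The trivialisation map.} The map $(\beta,t)\mapsto t\cdot P_\alpha(\beta)$ is a continuous bijection onto $q^{-1}(D_\alpha)$, with bijectivity from transitivity and freeness.
\end{enumerate}

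\emph{Main obstacle.} The hard step is showing the inverse of this trivialisation map is continuous. Concretely, we must show that $\sigma\mapsto t(\sigma)$, where $\sigma=t(\sigma)\cdot P_\alpha(q(\sigma))$, is locally constant. The plan is to write $\sigma\,P_\alpha(q(\sigma))^{-1}=i(\range(\sigma),t(\sigma))$. The left side is continuous in $\sigma$, and it lands in $i(G^{(0)}\times R^\times)=q^{-1}(G^{(0)})$. Each translate $t\cdot\Sigma^{(0)}$ is open, being the image of the open set $\Sigma^{(0)}$ under the homeomorphism $\sigma\mapsto t\cdot\sigma$, and these translates are disjoint by freeness. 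They therefore partition $q^{-1}(G^{(0)})$ into open pieces, so $t(\sigma)$ is locally constant. Condition \ref{it1:newtwist} is essential exactly here.

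The same observation resolves the shrinking step in (d): $U\cap (t\cdot U)\subseteq$ a set on which $\sigma\sigma'^{-1}\in t\cdot\Sigma^{(0)}$, which is empty near $\sigma_0$ for $t\neq 1$ once $U$ is chosen small. This step is also where Hausdorffness of $G$ may be needed.
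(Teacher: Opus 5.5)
Your proof is correct in substance, but the reverse direction follows a different route from the paper's. The paper does not construct trivialisations. It appeals to \cite[Proposition~2.2(3)]{ACCCLMRSS}, which reduces (DT2) to showing that $i$ is open and $q$ is a local homeomorphism. It gets $q|_{\Sigma^{(0)}}$ a homeomorphism via the Fell-criterion argument from that paper. It proves local injectivity of $q$ by a net argument: $t_n\cdot\range(\alpha_n)=(t_n\cdot\alpha_n)\alpha_n^{-1}\to\range(\sigma)$ eventually lies in the open set $\Sigma^{(0)}$, forcing $t_n=1$ by freeness. The forward direction is simply pieced together from the literature. You instead verify (DT1)--(DT3) by hand. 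You get $q|_{\Sigma^{(0)}}$ a homeomorphism directly from openness of $q$ and of $\Sigma^{(0)}$, and local sections are inverses of injective restrictions of $q$. The inverse trivialisation is continuous because $q^{-1}(G^{(0)})=\bigsqcup_{t}t\cdot\Sigma^{(0)}$ is a partition into open sets, so the phase $t(\sigma)$ read off from $\sigma P_\alpha(q(\sigma))^{-1}$ is locally constant. Openness of the translates $t\cdot\Sigma^{(0)}$ is the same fact the paper uses to show $i$ is open. Your version is self-contained and shows exactly where condition (1) enters. The paper's is shorter because the passage from ``$q$ a local homeomorphism, $i$ open'' to local triviality is outsourced.

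One step must be repaired. In (d), openness of $\range$ does not make $\Sigma$ étale. That needs local injectivity of $\range$, which here amounts to local injectivity of $q$, the very point at issue. Nor does discreteness of each fibre let you shrink $U$ until $q|_U$ is injective. If $U$ really were a bisection, $q|_U$ would already be injective, since the action preserves $\range$; so the unjustified claim carries the whole difficulty. Drop the bisection and choose any open $U\ni\sigma_0$ so small that $\sigma\sigma'^{-1}\in\Sigma^{(0)}$ whenever $\sigma,\sigma'\in U$ have the same source. This is possible by continuity of $(\sigma,\sigma')\mapsto\sigma\sigma'^{-1}$ at $(\sigma_0,\sigma_0)$ and openness of $\Sigma^{(0)}$. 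The choice does not depend on $t$. If $\sigma=t\cdot\sigma'$ with both in $U$, then $t\cdot\range(\sigma')=\sigma\sigma'^{-1}$ is a unit with range $\range(\sigma')$, hence equals $\range(\sigma')$, and freeness gives $t=1$. This is the paper's net argument in neighbourhood form. Your final paragraph gestures at it, and Hausdorffness of $G$ plays no role there. A minor point on forward step (e): under an arbitrary local section, the units over $D_\alpha$ correspond to the graph of a locally constant function, not literally $D_\alpha\times\{1\}$. That set is still open, so your conclusion stands.
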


\begin{proof}
The forward implication can be pieced together from the literature. We have established most of items \ref{it1:newtwist} and \ref{it2:newtwist} above, with the formula for the action given by \ref{eq:action}, and it is straightforward to check that equation~\eqref{eq:actionassoc} holds. 
For item \ref{it3:newtwist}, that $q$ is open follows from \cite[Proposition~2.2(4)]{ACCCLMRSS} and that the action is transitive on each fibre follows from (DT2).  That $i$ satisfies \eqref{eq:idef} follows from \cite[Proposition~4.7]{TSA2022}.

For the reverse implication, we use \cite[Proposition~2.2(3)]{ACCCLMRSS} which says it suffices to show that $i$ and $q$ are continuous groupoid homomorphisms that restrict to homeomorphisms of unit spaces; (DT1) and (DT3) hold; the map $i$ is open; and the map $q$ is a local homeomorphism. 

For $q$, we are given that it is a continuous groupoid homomorphism.  To see that $q$ restricts to a homeomorphism of unit spaces, the argument in the proof of \cite[Proposition~2.2]{ACCCLMRSS} applies (which uses Fell's open map criterion \cite[\S II.13.2]{Fell}). 

It is straightforward to check that $i$ is a groupoid homomorphism.  Since $q|_{\Sigma^{(0)}}^{-1}$ is a well-defined continuous function and the action is continuous, we have that $i$ is continuous, and it is also straightforward to show that $i$ restricts to a homeomorphism of unit spaces.

To see that $i$ is open, it suffices to show $i(U\times \{t\})$ is open in $\Sigma$ for an open subset $U \subseteq G^{(0)}$ and $t \in R^{\times}$.  Since $q$ is a homeomorphism when restricted to the unit space and $\Sigma^{(0)}$ is open in $\Sigma$, $q|_{\Sigma^{(0)}}^{-1}(U)$ is open in $\Sigma$.  The result follows by the continuity of the action.

Finally, we show that $q$ is a local homeomorphism.  All that remains is to show that $q$ is locally injective, which we do by contradiction. Suppose that there exists $\sigma \in \Sigma$ such that for every open set $U$ containing $\sigma$, there exist distinct points $\alpha, \beta \in U$ such that $q(\alpha)=q(\beta)$. Thus $\alpha$ and $\beta$ are in the same fibre and hence there exists a $t \in R^{\times}$, $t \neq 1$, such that $\beta =t\cdot \alpha$. Thus we can find nets 
$\{t_n\} \subseteq R^{\times}\setminus \{1\}$ and $\{\alpha_n\} \subseteq \Sigma$ such that both
\[\alpha_n \to \sigma \quad \text{ and } \quad t_n\cdot \alpha_n \to \sigma \in \Sigma.\]  
Using the algebra of limits, we see that 
\[
t_n\cdot \range(\alpha_n)= t_n\cdot \alpha_n(\alpha_n^{-1})\to \sigma\sigma^{-1}= \range(\sigma).\]
Since $\Sigma^{(0)}$ is open in $\Sigma$, $t_n\cdot \range(\alpha_n) \in \Sigma^{(0)}$ eventually.  But $q$ is injective on $\Sigma^{(0)}$, so we must have $t_n\cdot \range(\alpha_n)= \range(\alpha_n)$, and hence $t_n=e$ eventually because the action is free.  This is a contradiction, thus $q$ is locally injective. 
\end{proof}

\subsection{Twisted Steinberg algebras}

Given a topological space $X$ and a ring $R$, we write $C(X,R)$ for the $R$-module of locally constant functions from $X$ to $R$. We define the \emph{support} of $f \in C(X,R)$ to be the set $\spp(f):= \{ x\in X: f(x)\neq 0 \}$.  Note that when we endow $R$ with the discrete topology, functions in $C(X,R)$ are continuous and $\spp(f)$ is a clopen set. 

Let $\twist$ be a discrete $R$-twist over an ample Hausdorff groupoid $G$. A function $f\in C(\Sigma,R)$ is said to be $R^{\times}$\textit{-contravariant} if $f(t \cdot \sigma)=t^{-1}f(\sigma)$ for all $t\in R^{\times}$ and $\sigma \in \Sigma$. 
The \emph{twisted Steinberg algebra} associated to $\twist$ over $G$ is the collection
\begin{equation}\label{TSconvolution}
    \DTS:= \{ f \in C(\Sigma,R) : f~\text{is $R^{\times}$-contravariant and $q(\spp(f))$ is compact} \},
\end{equation}
together with pointwise addition, scalar multiplication and multiplication given by the following convolution formula (see \cite[Proposition~2.9]{ACCCLMRSS}):
\begin{equation*}
    (f*g)(\sigma) = \sum\limits_{\alpha\in G^{\range(\sigma)} \cap \spg(f)} f(S(\alpha))g(S(\alpha)^{-1}\sigma),
\end{equation*}
where $f,g \in \DTS$ and $S: \spg(f) \to \spp(f)$ is any section (not necessarily continuous) for $q$ on $\spg(f):= q(\spp(f))$. 

Note that $\DTS=\ls\{\til_U: U ~\textnormal{is a compact open bisection of}~\Sigma\}$, where $\til_U$ is an element of $\DTS$ defined as
\[
\til_U(\sigma) =\begin{cases}
    t^{-1}&\text{if } \sigma \in t\cdot U \text{ for some }t \in R^{\times},\\
    0&\text{otherwise.}
\end{cases}\]
Moreover, for compact open bisections $U$ and $V$ of $\Sigma$, we have $\til_U*\til_V=\til_{UV}$.

Consider $A:= \DTS$ and $B:= A_R(G^{(0)}; q^{-1}(G^{(0)})) \subseteq A$. It follows from \cite[Lemma~4.2]{ACCCLMRSS} that the pair $(A,B)$ is an algebraic quasi-Cartan pair if and only if $\twist$ satisfies the \emph{local bisection hypothesis}, in the sense that, for every normaliser $n$ of $B$ in $A$, the set $\spg(n)$ is a bisection of $G$. Moreover, $B$ is isomorphic to the algebra $A_R(\Sigma^{(0)})$ of locally constant functions on $\Sigma^{(0)}$, and hence also isomorphic to $A_R(G^{(0)})$.  
In the sequel, we will identify them without comment. 

\section{Cartan inverse semigroups}\label{sec:semicartan}
Here we introduce our main objects of study.  
Given an inverse semigroup $N$, we denote by $n^{\dagger}$ the unique inverse of $n \in N$ satisfying $nn^{\dagger}n=n$ and $n^{\dagger}nn^{\dagger}=n^{\dagger}$. Note that the idempotents $I(N)$ of $N$ commute and $I(N)$ is equal to the set $\{n^{\dagger}n: n \in N\}$. 

\defn{\label{defn: cis} 
Let $R$ be a commutative unital ring, $A$ be an $R$-algebra, and $N\subseteq A$ an inverse semigroup. Denote by $B_N:=\ls_R(I(N))$. We say that $N$ is a \emph{Cartan inverse semigroup} of $A$ if the following conditions are satisfied:
\begin{enumerate}
    \item[(N1)]\label{cis1} $I(B_N)\subseteq N$.
    \item[(N2)]\label{cis2} $A = \ls_R (N)$.
    \item[(N3)]\label{cis3}  
    If $e\in I(N) \setminus \{0\}$ and $t\in R$ with $te = 0$, then $t =0$.
    \item[(N4)]\label{cis4}
    For any $\{n_1, \ldots, n_k\} \subseteq N$, there exists $e \in I(N)$ with $n_ie = n_i = en_i$ for all $1 \leq i \leq k$. 
    \item[(N5)]\label{cis5} If $t \in R^\times$ and $n\in N$, then $tn \in N$.
    \item[(N6)]\label{cis6} There exists a conditional expectation $P:A\to B_N$ such that for all $n\in N$,
        \begin{equation}\label{Nstable}
            P(n)n^{\dagger} \in B_N. \tag{$N$-stable}
        \end{equation}
\end{enumerate}
We call $B_N$ the \emph{(algebraic) semi-Cartan subalgebra associated to $N$.}}

\begin{rmk}
To distinguish our semigroups from C*-algebraic Cartan semigroups, we call them `Cartan inverse semigroups'; Cartan semigroups in a C*-algebra  are not, in general, inverse semigroups. 
    The word `algebraic' in front of our `semi-Cartan subalgebra' is there to avoid confusion with C*-algebraic semi-Cartan subalgebras.  In this paper, we are working solely in the algebraic setting so we omit the adjective `algebraic' in what follows.
\end{rmk}

\begin{rmk}\label{rem:BN is commutative}
The subalgebra $B_N$ is commutative: if $b = \sum_i r_i e_i$ and $b' = \sum_j s_j f_j$ with $r_i, s_j \in R$ and $e_i, f_j \in I(N)$, then $bb' = \sum_{i,j} r_i s_j e_i f_j = \sum_{i,j} s_j r_i f_j e_i = b'b$, since $R$ is commutative and the idempotents of an inverse semigroup commute.
\end{rmk}

To simplify the notation, we will often denote the semi-Cartan subalgebra associated to a Cartan inverse semigroup $N$ simply by $B$ unless we need to worry about different associated subalgebras.

Our first observation, \cref{equistat:ibi} below, is that Condition \eqref{Nstable} for a conditional expectation $P$ is equivalent to requiring that $P$ be ``implemented by idempotents" on elements of $N$.  Note that in \cite[Definition~3.3]{ACCCLMRSS} this implementation is for all normalisers. In our more general setting, we only require it to hold on the Cartan inverse semigroup $N$. To show that, first we need a lemma. 

\begin{lem}\label{lem:P-identities}
	Let $N$ be a Cartan inverse semigroup of $A$ with associated semi-Cartan
	subalgebra $B$, and let $P\colon A\to B$ be an $N$-stable conditional
	expectation. Then for every $n\in N$:
	\begin{enumerate}[label=(\arabic*), font=\normalfont]
		\item\label{it:P-a} $P(n)=nn^{\dagger}P(n)=P(n)n^{\dagger}n$;
		\item\label{it:P-b} $P(n)n^{\dagger}\in I(N)$ and $n^{\dagger}P(n)\in I(N)$;
		\item\label{it:P-c} $P(n)n^{\dagger}=P(n)P(n^{\dagger})=P(n^{\dagger})P(n)=n^{\dagger}P(n)$.
	\end{enumerate}
\end{lem}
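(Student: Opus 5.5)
The plan is to combine $B$-linearity of $P$ with the fact that $nn^{\dagger}, n^{\dagger}n \in I(N) \subseteq B$. Throughout, $N$-stability is used to land in $B$, and then (N1) upgrades ``idempotent in $B$'' to ``element of $N$''.

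For \ref{it:P-a}, I would write $n = nn^{\dagger}n$. Since $nn^{\dagger}, n^{\dagger}n \in I(N)\subseteq B$, the bimodule property of $P$ gives
\[
P(n)=P(nn^{\dagger}\cdot n)=nn^{\dagger}P(n)
\quad\text{and}\quad
P(n)=P(n\cdot n^{\dagger}n)=P(n)n^{\dagger}n .
\]

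For \ref{it:P-b}, set $e:=P(n)n^{\dagger}$, which lies in $B$ by \eqref{Nstable}. I will show $e$ is idempotent. Using \ref{it:P-a} and the fact that $n^{\dagger}n\in B$,
\[
e^2 = P(n)n^{\dagger}P(n)n^{\dagger} = P\bigl(n\,(n^{\dagger}P(n))\bigr)n^{\dagger}.
\]
The hard part is that $n^{\dagger}P(n)$ is not obviously in $B$, so $P$ cannot simply be pulled through. To get around this, I would use commutativity of $B$ (\cref{rem:BN is commutative}) and the identity $P(n)=P(n)n^{\dagger}n$ from \ref{it:P-a}. Then
\[
e^2 = P(n)n^{\dagger}\,P(n)n^{\dagger} = e\cdot P(n)\,n^{\dagger},
\]
and I would compute $eP(n)=P(en)$, which is valid because $e\in B$. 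Now $en=P(n)n^{\dagger}n=P(n)$, using \ref{it:P-a} once more. Hence $P(en)=P(P(n))=P(n)$, since $P(n)\in B$ and $P$ is the identity on $B$. This gives $e^2=P(n)n^{\dagger}=e$.

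So $e\in I(B)$, and (N1) gives $e\in I(N)$. For $n^{\dagger}P(n)$, I would apply the same argument to $n^{\dagger}\in N$: this shows $P(n^{\dagger})n\in I(N)$ but does not directly treat $n^{\dagger}P(n)$. Therefore it is better to prove \ref{it:P-c} first and deduce the second half of \ref{it:P-b} from it.

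For \ref{it:P-c}, I would use the $B$-linearity of $P$ with $e=P(n)n^{\dagger}\in B$:
\[
P(n)P(n^{\dagger}) = P\bigl(P(n)n^{\dagger}\bigr)=P(e)=e .
\]
Symmetrically, $f:=P(n^{\dagger})n\in B$ gives $P(n^{\dagger})P(n)=P(f)=f$. Commutativity of $B$ then yields $e=f$, that is, $P(n)n^{\dagger}=P(n^{\dagger})n=P(n)P(n^{\dagger})=P(n^{\dagger})P(n)$.

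It remains to show $n^{\dagger}P(n)=P(n)n^{\dagger}$. Applying \ref{it:P-a} to $n^{\dagger}$ gives $P(n^{\dagger})=P(n^{\dagger})nn^{\dagger}$, so $f=P(n^{\dagger})n$. For $n^{\dagger}P(n)$, I would try to show that $n^{\dagger}P(n)$ lies in $B$ by writing
\[
n^{\dagger}P(n)=n^{\dagger}\,nn^{\dagger}P(n)=n^{\dagger}n\cdot n^{\dagger}P(n).
\]
If this fails, the fallback is the stronger identity $n^{\dagger}P(n)=P(n^{\dagger}n\,n^{\dagger}\cdots)$. Better still, consider $x:=n^{\dagger}P(n)$. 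Then $x\,n^{\dagger}=n^{\dagger}P(n)n^{\dagger}=n^{\dagger}e$, and $e=ene^{\dagger}$-type manipulations would be needed.

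The cleanest route I would try first is the following. Since $e\in I(N)$ and $P(n)=en$, we get $n^{\dagger}P(n)=n^{\dagger}en$. In an inverse semigroup, $n^{\dagger}en$ is an idempotent of $N$, which gives the second half of \ref{it:P-b}. To see that it equals $e$, I would show that $e\le nn^{\dagger}$ and that $e$ commutes with $n$. Here $e = P(n)n^{\dagger}=P(n^{\dagger})n$, so $en=P(n^{\dagger})n n$ and $ne=nP(n)n^{\dagger}$. Comparing these is the step I expect to be the main obstacle, and I would attack it by showing both equal $P(n)$ using $e=e^2$ together with the two expressions $e=P(n)n^{\dagger}=P(n^{\dagger})n$.
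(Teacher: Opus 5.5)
Your proofs of (1), of the idempotence of $e:=P(n)n^{\dagger}$, and of $P(n)n^{\dagger}=P(n)P(n^{\dagger})=P(n^{\dagger})P(n)$ are correct and close to the paper's. You also eventually show $n^{\dagger}P(n)=n^{\dagger}en\in I(N)$, which is the second half of (2).

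The gap is the last equality of (3), $n^{\dagger}P(n)=P(n)n^{\dagger}$, which is never proved. You flag it as the main obstacle. Your plan of showing that $e$ commutes with $n$ by comparing $en$ and $ne$ does not say what would force $ne=en$. Inverse-semigroup manipulations cannot supply it on their own: for partial bijections, an idempotent $e\le nn^{\dagger}$ need not commute with $n$. So some property of $P$ must be used. Rewriting $ne=nP(n^{\dagger})n$ only reproduces the same question for $n^{\dagger}$.

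The missing step is one line, and you already have every ingredient. Because $g:=n^{\dagger}P(n)$ is an idempotent of $N$, it lies in $I(N)\subseteq B$, so $P(g)=g$. Because $P(n)\in B$, the module property then gives
\[
n^{\dagger}P(n)=g=P\bigl(n^{\dagger}P(n)\bigr)=P(n^{\dagger})P(n)=P(n)n^{\dagger}.
\]
This is exactly the paper's argument. You evaluated $P(n^{\dagger})P(n)$ via $P(n^{\dagger})n$; the paper evaluates it via $n^{\dagger}P(n)$, and that choice is what closes (3).

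A minor point: the paper writes $g=P(e_{0}n^{\dagger}P(n))=e_{0}P(n^{\dagger})P(n)$, with a local unit $e_{0}$ from (N4), so that the two-sided identity $P(bab')=bP(a)b'$ applies literally. Your one-sided uses of the module property are justified the same way.
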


\begin{proof}
	Throughout, fix by (N4) an $e_{0}\in I(N)$ with $ne_{0}=n=e_{0}n$ and
	$n^{\dagger}e_{0}=n^{\dagger}=e_{0}n^{\dagger}$.
	
	\ref{it:P-a} Since $nn^{\dagger},n^{\dagger}n\in I(N)\subseteq B$, the
	bimodule property gives $P(n)=P((nn^{\dagger})n)=nn^{\dagger}P(n)$ and
	$P(n)=P(n (n^{\dagger}n))=P(n)n^{\dagger}n$.
	
	\ref{it:P-b} Set $f:=P(n)n^{\dagger}$, which lies in $B$ by \eqref{Nstable}.
	By \ref{it:P-a}, $fn=P(n)n^{\dagger}n=P(n)$ and
	$fnn^{\dagger}=P(n)n^{\dagger}=f$. Since $P(n)\in B$ and $P|_{B}=\mathrm{id}_B$,
	\[
	fn=P(n)=P\bigl(P(n)\bigr)=P(fn)=fP(n)=f^{2}n,
	\]
	and multiplying on the right by $n^{\dagger}$ gives $f^{2}nn^{\dagger}=fnn^{\dagger}$,
	that is, $f^{2}=f$. Hence $f\in I(B)=I(N)$ by (N1).
	
	Now set $g:=n^{\dagger}P(n)=n^{\dagger}fn$, which is a product of elements of
	$N$ and so lies in $N$. Since $f$ and $nn^{\dagger}$ are commuting idempotents,
	$f(nn^{\dagger})f=f^{2}nn^{\dagger}=fnn^{\dagger}=f$, and therefore
	\[
	g^{2}=n^{\dagger}f(nn^{\dagger})fn=n^{\dagger}fn=g,
	\]
	so $g\in I(N)$.
	
	\ref{it:P-c} By \ref{it:P-b} we have $f,g\in I(N)\subseteq B$, so $P(f)=f$
	and $P(g)=g$. Using the bimodule property with $P(n),P(n^{\dagger})\in B$,
	together with $P(n^{\dagger})e_{0}=P(n^{\dagger}e_{0})=P(n^{\dagger})$ and
	$e_{0}P(n^{\dagger})=P(n^{\dagger})$, we obtain
	\[
	f=P(f)=P\bigl(P(n)\,n^{\dagger}e_{0}\bigr)=P(n)P(n^{\dagger})e_{0}=P(n)P(n^{\dagger}),
	\]
	\[
	g=P(g)=P\bigl(e_{0}n^{\dagger}\,P(n)\bigr)=e_{0}P(n^{\dagger})P(n)=P(n^{\dagger})P(n).
	\]
	Since $B$ is commutative (see \cref{rem:BN is commutative}), we have
	$P(n)P(n^{\dagger})=P(n^{\dagger})P(n)$, and hence $f=g$.
\end{proof}
 
\begin{prop}\label{equistat:ibi}
    Suppose that $N \subseteq A$ is an inverse semigroup satisfying conditions (N1) -- (N5)
    %\ref{cis1} -- \ref{cis5}
    of \cref{defn: cis} with $B=\ls_R(I(N))$, and that $P:A \to B$ is a conditional expectation. Then Condition \eqref{Nstable} on $P$ is equivalent to the following statement:
    \begin{enumerate}
        \item[] For all $n\in N$, there exists $e\in I(N)$ such that 
        \begin{equation}\label{ibi}
            P(n)=ne=en. 
        \end{equation}
    \end{enumerate}
\end{prop}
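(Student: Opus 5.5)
We prove the two implications separately. The forward direction leans on \cref{lem:P-identities}; the reverse direction is a short computation using (N1).

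Assume first that $P$ is $N$-stable and fix $n\in N$. By \cref{lem:P-identities}\ref{it:P-b},\ref{it:P-c}, the element $e:=P(n)n^{\dagger}=n^{\dagger}P(n)$ is an idempotent in $I(N)$. We aim to show $P(n)=ne=en$. Using \cref{lem:P-identities}\ref{it:P-a}, we have
\[
ne=n\,n^{\dagger}P(n)=nn^{\dagger}P(n)=P(n).
\]
Similarly,
\[
en=P(n)n^{\dagger}n=P(n).
\]
Thus \eqref{ibi} holds with this $e$, and no further input is needed.

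Conversely, suppose that for each $n\in N$ there is $e\in I(N)$ with $P(n)=ne=en$. Then
\[
P(n)n^{\dagger}=enn^{\dagger}.
\]
This is a product of the two idempotents $e$ and $nn^{\dagger}$ of $N$, so it lies in $I(N)\subseteq B$, since $I(N)$ is closed under products. Hence \eqref{Nstable} holds. Alternatively, $nen^{\dagger}\in N$ and $ne=en$ give $nen^{\dagger}=enn^{\dagger}$, which is again an idempotent.

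We expect no real obstacle here. The only point needing care is the forward direction, namely checking that the idempotent $P(n)n^{\dagger}$ works on both sides of $n$. This relies on the identity $P(n)n^{\dagger}=n^{\dagger}P(n)$ from \cref{lem:P-identities}\ref{it:P-c}, which in turn uses the commutativity of $B$.
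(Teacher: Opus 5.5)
Your proof is correct and takes essentially the same route as the paper. The forward direction uses $e=P(n)n^{\dagger}=n^{\dagger}P(n)\in I(N)$ from \cref{lem:P-identities}, and the converse notes that $P(n)n^{\dagger}=enn^{\dagger}$ is a product of idempotents of $N$, hence lies in $I(N)\subseteq B$; you also spell out the check $ne=en=P(n)$ via \cref{lem:P-identities}\ref{it:P-a}, which the paper leaves implicit.
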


\begin{proof}  Suppose $P$ satisfies Condition \eqref{Nstable}.
     Fix $n\in N$. Then $P(n)= nn^{\dagger}P(n) = P(n)n^{\dagger}n$.  
    Consider $e:=n^{\dagger}P(n)$ and $e':=P(n)n^{\dagger}$. Then by 
    \cref{lem:P-identities}\ref{it:P-c}, we have $e=e' \in I(N)$, as needed.
    
    Conversely, suppose that $P$ satisfies \pref{ibi}. Then there exists $e\in I(N)$ such that $P(n)=en$. Hence, $P(n)n^{\dagger}=enn^\dagger \in I(N)=I(B) \subseteq B$, implying that $P$ satisfies Condition \eqref{Nstable}.  
\end{proof}

The motivating example is that \emph{all} twisted Steinberg algebras contain a Cartan inverse semigroup, which we show in \cref{prop:twoNs}. More precisely, given an indecomposable commutative ring $R$ and a discrete $R$-twist $\Sigma$ over an ample Hausdorff groupoid $G$, consider the following collections:
\begin{align}\label{N0}
N_0&:= \{\tilde{1}_V: V \, \text{is a compact open bisection of }  \Sigma\}, ~\textnormal{and}
\\
N_1&:= \{f\in A_R(G; \Sigma): q(\spp(f)) \subseteq V \, \text{ for some open bisection} \, V \subseteq G \ \text{ and } \, f(\Sigma) \subseteq R^{\times}\cup \{0\}\}.
\end{align}
We show that $N_0$ is a Cartan inverse semigroup and that $N_0$ and $N_1$ coincide in \cref{prop:twoNs}. We thus use $N_0$ and $N_1$ interchangeably as suits our purpose. Note that there still might be other Cartan inverse semigroups sitting inside the twisted Steinberg algebra $A:=A_R(G; \Sigma)$, that is,  $N_0$ (or $N_1$) is not necessarily the unique one to generate $A_R(G^{(0)})$. 

\begin{prop}\label{prop:twoNs}
    The collection $N_0$ is a Cartan inverse semigroup for $\DTS$ and its associated semi-Cartan subalgebra $B_{N_0}$ is isomorphic to $A_R(G^{(0)})$. Moreover, $N_0=N_1.$
\end{prop}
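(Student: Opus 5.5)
The plan is to prove $N_0=N_1$ first, because $N_1$ is the more convenient description for several of the axioms, and then check (N1)--(N6) for $N_0$.

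\emph{The inclusion $N_0\subseteq N_1$.} For a compact open bisection $V$ of $\Sigma$, the function $\til_V$ takes values in $R^\times\cup\{0\}$. Its support is $R^\times\cdot V$, and $q(R^\times\cdot V)=q(V)$ is an open bisection of $G$, because $q$ is open and injective on bisections by (DT1)--(DT2).

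\emph{The inclusion $N_1\subseteq N_0$.} Let $f\in N_1$, so $W:=q(\spp(f))$ is compact and contained in a bisection. Choose a section $S:W\to\spp(f)$ with $f(S(\alpha))=1$, namely $S(\alpha)=f(\sigma)\cdot\sigma$ for any $\sigma\in q^{-1}(\alpha)\cap\spp(f)$. This choice is independent of $\sigma$ by contravariance and lands in $\spp(f)$ because $f(f(\sigma)\cdot\sigma)=1$. Set $V:=f^{-1}(1)$. Since $f$ is locally constant, $V$ is clopen in $\Sigma$, and $q|_V$ is a bijection onto $W$. Local injectivity of $q$ makes $V$ a bisection, compactness of $V$ comes from local triviality over a finite cover of $W$, and then $f=\til_V$.

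\emph{Closure and the idempotents.} $N_0$ is an inverse semigroup under $\til_U*\til_V=\til_{UV}$, with $\til_V^\dagger=\til_{V^{-1}}$. Its idempotents are $\til_U$ with $U\subseteq\Sigma^{(0)}$, i.e.\ $1_K$ for compact open $K\subseteq G^{(0)}$. This uses that an idempotent bisection $U$ satisfies $UU=U$, hence consists of units. Therefore $B_{N_0}=\ls\{1_K\}$, which is $A_R(G^{(0)})$, using the standard fact that locally constant compactly supported functions are finite combinations of indicator functions of compact opens.

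\emph{The axioms.} The main obstacle is (N1). Every idempotent of $A_R(G^{(0)})$ is a locally constant function $b$ with $b^2=b$ pointwise. Since $R$ is indecomposable, $b$ takes values in $\{0,1\}$, so $b=1_K$ for $K=\spp(b)$ compact open, and hence $b\in N_0$. The remaining axioms are routine:
\begin{enumerate}
\item[(N2)] holds by the spanning remark for $\DTS$.
\item[(N3)] holds because $t1_K=0$ with $K\neq\emptyset$ forces $t=0$.
\item[(N4)] holds by taking $K=\bigcup_i(\range(q(V_i))\cup\source(q(V_i)))$, which is compact open.
\item[(N5)] holds via $t\til_V=\til_{t^{-1}\cdot V}$; alternatively, $t\til_V$ visibly lies in $N_1$.
\end{enumerate}

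\emph{The expectation for (N6).} Take $P(f)=f|_{q^{-1}(G^{(0)})}$, which makes sense because $G^{(0)}$ is clopen, $G$ being Hausdorff. Linearity, $P|_B=\mathrm{id}$, and the bimodule property follow from the convolution formula, since multiplying by $b\in B$ only rescales values fibrewise. For $N$-stability, set $V':=V\cap\Sigma^{(0)}$, which is compact open since $\Sigma^{(0)}$ is clopen. Then $P(\til_V)=\til_{V'}=\til_V*1_{\source(V')}$, so \cref{equistat:ibi} gives \eqref{Nstable}.
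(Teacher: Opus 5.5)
Most of your proposal is sound. Your direct argument for $N_1\subseteq N_0$ (normalising $f$ to $V:=f^{-1}(1)$, checking that $V$ is a compact open bisection, and reading off $f=\til_V$ from contravariance) is a valid, self-contained alternative to the paper's route. The paper instead observes $N_1\subseteq N(B)$ and cites \cite[Proposition~8.6]{ACCCLMRSS}.

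The genuine problem is in your verification of (N6). There $V':=V\cap\Sigma^{(0)}$ is the wrong idempotent, and the identity $P(\til_V)=\til_{V'}$ fails whenever $V$ meets $q^{-1}(G^{(0)})\setminus\Sigma^{(0)}$. For instance, take a nonempty compact open $U\subseteq\Sigma^{(0)}$, some $t\in R^\times\setminus\{1\}$, and $V:=t\cdot U$. Then $\til_V=t\til_U$ already lies in $B$, so $P(\til_V)=t\til_U\neq 0$. But $V\cap\Sigma^{(0)}=\emptyset$ because $i$ is injective, so $\til_{V'}=0$. Indeed, your formula would make $P(n)$ an idempotent for every $n\in N_0$, which $t\til_U$ is not. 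Restricting to $q^{-1}(G^{(0)})=R^\times\cdot\Sigma^{(0)}$ keeps the points $s\cdot x\in V$ with $x$ a unit and $s\neq 1$, whereas intersecting with $\Sigma^{(0)}$ throws them away.

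The repair is exactly the paper's choice: take the idempotent from the $R^\times$-saturated support, $U:=\spp(\til_V)\cap\Sigma^{(0)}=(R^\times\cdot V)\cap\Sigma^{(0)}$. This is compact open, since it equals $q|_{\Sigma^{(0)}}^{-1}(q(V)\cap G^{(0)})$. Because $q(V)$ is a bisection, an element $v\in V$ satisfies $\range(v)\in U$ (equivalently $\source(v)\in U$) if and only if $q(v)\in G^{(0)}$. Hence $UV=VU=V\cap q^{-1}(G^{(0)})$, and so $P(\til_V)=\til_{V\cap q^{-1}(G^{(0)})}=\til_U*\til_V=\til_V*\til_U$. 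Then \cref{equistat:ibi} gives \eqref{Nstable}.

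There is also a smaller slip in (N5): $t\til_V=\til_{t\cdot V}$, not $\til_{t^{-1}\cdot V}$. Evaluating both at a point of $V$, the first gives $t$ and the second gives $t^{-1}$. Your fallback via $N_1$ covers this anyway.
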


\begin{proof}
To see that $N_0$ is an inverse semigroup, recall from \cite[Lemma~4.2]{ACCCLMRSS} that for any compact open bisections $U,V$ of $\Sigma$, we have $\til_U\til_V=\til_{UV} \in N_0$ since $UV$ is a compact open bisection of $\Sigma$, and $\til_{V}^\dagger=\til_{V^{-1}}$ is the inverse of $\til_V$. We also have $A=\ls_R(N_0)$ and $B_{N_0} := \ls_R\{\tilde{1}_V^{\dagger} \tilde{1}_V: \tilde{1}_V\in N_0\}$ is isomorphic to $B:=A_R(G^{(0)};q^{-1}(G^{(0)}))$ which is also isomorphic to $A_R(G^{(0)})$ with a faithful conditional expectation $P: A \to B$, given by restriction of functions from $\Sigma$ to $q^{-1}(G^{(0)})$. Note that since $R$ is indecomposable, we have $I(B_{N_0})=I(N_0)\subseteq N_0$, and so (N1) holds.

Next, we claim that the conditional expectation $P$, when restricted to $N_0$, is implemented by idempotents of 
$B$, so that by \cref{equistat:ibi}, $P$ is $N_0$-stable.   
 The argument is similar to the one in \cite[Lemma~4.2]{ACCCLMRSS} (although we need to be a little careful with our identification of $A_R(G^{(0)})$ and $A_R(G^{(0)};q^{-1}(G^{(0)}))$).  Fix $\tilde{1}_V \in N_0$.    
 Let 
 \[
 U= \operatorname{supp}(\tilde{1}_V) \cap \Sigma^{(0)}
 \] and let $e= \tilde{1}_U$. Then a routine calculation shows that $P(\tilde{1}_V) = e\tilde{1}_V=\tilde{1}_Ve$, proving the claim.  Moreover, for $t\in R^\times$, $t\til_V=\til_{t\cdot V}\in N_0$, since $t\cdot V$ is also a compact open bisection of $\Sigma$, and so (N5) holds. We also see that for $t\in R$ and $\til_U \in I(N_0)\setminus \{0\}$ with $t\til_U=0$ we have $t=0$, and so (N3) holds. 

To see that $N_0$ satisfies (N4), take a finite collection $\{\til_{V_i}\}_{i=1}^k$ and let $f=\tilde{1}_W$, where 
\[
W = \bigcup_{i=1}^k(\source(V_i) \cup \range(V_i)).
\]
Then $f\in I(N_0)$ satisfies $\til_{V_i}f=\til_{V_i}=f\til_{V_i}$ for all $i=1, \ldots, k$. 

For the last statement, it is clear that $N_0 \subseteq N_1$.  Since $N_1 \subseteq N(B)$, the reverse containment follows from \cite[Proposition~8.6]{ACCCLMRSS}. 
\end{proof}

\begin{rmk}
   Let $\mathcal{B}$ be a basis of compact open bisections such that $\mathcal{B}$ is an inverse semigroup. The collection $N_2:= \{\tilde{1}_X: X\in \mathcal{B}$\} need not be a Cartan inverse semigroup.  Here we would have $\ls_R(I(N_2))\cong A_R(G^{(0)})$ but there might be additional idempotents of $\ls_R(I(N_2))$ that are not in $N_2$. 
\end{rmk}

Before showing how semi-Cartan subalgebras generalise quasi-Cartan and graded quasi-Cartan subalgebras (see \cref{prop:NB-alg-semiCartan} and \cref{gqc}), in the remainder of this section, we establish some properties of Cartan inverse semigroups and their corresponding semi-Cartan subalgebras, then in \cref{P}, we establish some properties of stable conditional expectations.  

\begin{prop}\label{res: scB} 
Let $N$ be a Cartan inverse semigroup of $A$. Then $B_N$ is a commutative subalgebra of $A$ such that 
\begin{enumerate}[label=(\alph*), font=\normalfont]
   \item\label{scB1} $B_N=\ls_R(I(B_N))$,
    \item\label{scB2} $B_N$ satisfies Condition \pref{WT}, and
    \item\label{scB3} $I(B_N)$ is a set of local units for $A$.
\end{enumerate}
\end{prop}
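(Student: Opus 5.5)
Commutativity of $B_N$ is already recorded in \cref{rem:BN is commutative}, and $B_N$ is a subalgebra because $I(N)$ is closed under multiplication (idempotents of an inverse semigroup form a commutative subsemigroup). So the content lies in \ref{scB1}--\ref{scB3}, and each reduces quickly to one of (N1), (N3) and (N4). The underlying observation is that $I(N)\subseteq I(B_N)\subseteq N$: every element of $I(N)$ is an idempotent of $A$ lying in $B_N$, and (N1) gives the second inclusion.

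For \ref{scB1}, first note $I(B_N)\subseteq I(N)$. Indeed, an element of $I(B_N)$ lies in $N$ by (N1) and is idempotent, so it lies in $I(N)$. Combined with the observation above, this gives $I(B_N)=I(N)$. Hence $\ls_R(I(B_N))=\ls_R(I(N))=B_N$ by definition.

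For \ref{scB2}, I use the equality $I(B_N)=I(N)$ just established. Given $e\in I(B_N)\setminus\{0\}$ and $t\in R$ with $te=0$, we have $e\in I(N)\setminus\{0\}$, so (N3) gives $t=0$. This is exactly Condition \pref{WT}.

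For \ref{scB3}, take $a_1,\dots,a_k\in A$. By (N2), each $a_i$ is a finite $R$-linear combination of elements of $N$; collect all the elements of $N$ so used into one finite set $\{n_1,\dots,n_m\}$. By (N4) there is $e\in I(N)=I(B_N)$ with $en_j=n_j=n_je$ for all $j$. Linearity then gives $ea_i=a_i=a_ie$ for each $i$. No step is a real obstacle; the only point needing care is the identification $I(B_N)=I(N)$, which rests on (N1).
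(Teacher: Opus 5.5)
Your proof is correct and takes the same approach as the paper. The paper likewise derives (a) from the identification $I(B_N)=I(N)$, which rests on (N1), obtains (b) from (N3), and obtains (c) from (N2) together with (N4); you simply spell out details it leaves implicit, namely the two inclusions giving $I(B_N)=I(N)$ and the closure of $B_N$ under multiplication.
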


\begin{proof}
Commutativity of $B_N$ is addressed in \cref{rem:BN is commutative}. Part \ref{scB1} immediately follows since $I(N)=I(B_N)$. Then part \ref{scB2} follows from (N3) and part \ref{scB3} follows from (N2) and (N4).
\end{proof}

We next consider the relationship between a Cartan inverse semigroup and the semigroup of normalisers.

\begin{prop}\label{prop:N-in-normalisers}
	Let $N$ be a Cartan inverse semigroup of $A$ with associated semi-Cartan
	subalgebra $B$.
	\begin{enumerate}[label=(\arabic*), font=\normalfont]
		\item\label{it:N38-1} For every $n\in N$, we have
		$nBn^{\dagger}\cup n^{\dagger}Bn\subseteq B$; in particular
		$N\subseteq N(B)$.
		\item\label{it:N38-2} If $P\colon A\to B$ is an $N$-stable conditional
		expectation, then for every $n\in N$ we have $P(n)\in N$ with
		$P(n)^{\dagger}=P(n^{\dagger})$; in particular $P(n)\in N(B)$.
	\end{enumerate}
\end{prop}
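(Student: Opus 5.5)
For part \ref{it:N38-1}, I will use that $B=\ls_R(I(N))$ and that conjugation by $n$ preserves idempotents in an inverse semigroup. By linearity it suffices to show $nen^{\dagger}\in B$ and $n^{\dagger}en\in B$ for each $e\in I(N)$. Now $nen^{\dagger}\in N$ because $N$ is a semigroup containing $n,e,n^{\dagger}$. It is idempotent by the standard computation
\[
(nen^{\dagger})^2=ne(n^{\dagger}n)en^{\dagger}=n(n^{\dagger}n)e\,e\,n^{\dagger}=nen^{\dagger}.
\]
This uses that the idempotents $e$ and $n^{\dagger}n$ commute, together with $nn^{\dagger}n=n$. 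Hence $nen^{\dagger}\in I(N)\subseteq B$, and the same argument with $n$ and $n^{\dagger}$ swapped handles $n^{\dagger}en$. Taking $m=n^{\dagger}$ in \eqref{normaliser}, the relations $mnm=m$ and $nmn=n$ hold by definition of $n^{\dagger}$, and the containment $mBn\cup nBm\subseteq B$ is what we just proved. So $n\in N(B)$.

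For part \ref{it:N38-2}, fix $n\in N$ and use \cref{equistat:ibi} to write $P(n)=ne=en$ for some $e\in I(N)$. Then $P(n)=ne$ is a product of elements of $N$, so $P(n)\in N$. To identify $P(n)^{\dagger}$, I will check that $n^{\dagger}e=en^{\dagger}$ is an inverse of $ne$. First, $en=ne$ gives $n^{\dagger}e=en^{\dagger}$: both sides are the unique inverse of $ne=en$, since $(en)^{\dagger}=n^{\dagger}e$ and $(ne)^{\dagger}=en^{\dagger}$ in an inverse semigroup. By uniqueness of inverses, $P(n)^{\dagger}=en^{\dagger}$.

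It remains to show $en^{\dagger}=P(n^{\dagger})$, and this is the step needing care. I will use \cref{lem:P-identities}. Part \ref{it:P-b} gives $f:=P(n)n^{\dagger}\in I(N)$, and part \ref{it:P-c} gives $f=P(n)P(n^{\dagger})=n^{\dagger}P(n)$. Then
\[
P(n)^{\dagger}=en^{\dagger}=n^{\dagger}e\,e=n^{\dagger}\,ne\,n^{\dagger}\cdots.
\]
A cleaner route is to apply \cref{equistat:ibi} to $n^{\dagger}$ as well, writing $P(n^{\dagger})=n^{\dagger}e'=e'n^{\dagger}$, so that both $P(n)$ and $P(n^{\dagger})$ lie in $N$. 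I will then show that $P(n^{\dagger})$ satisfies the two inverse equations for $P(n)$:
\[
P(n)P(n^{\dagger})P(n)=fP(n)=P(n)n^{\dagger}P(n)=P(n)\,g,\qquad g=n^{\dagger}P(n)=f.
\]
So $P(n)P(n^{\dagger})P(n)=P(n)f$, and $P(n)f=P(n)n^{\dagger}P(n)=ne\,n^{\dagger}\,ne=nn^{\dagger}n\,e=ne=P(n)$, using the commutation $en^{\dagger}=n^{\dagger}e$ shown above. Symmetrically, $P(n^{\dagger})P(n)P(n^{\dagger})=P(n^{\dagger})$, where I apply the same identity to $n^{\dagger}$ in place of $n$, noting that $(n^{\dagger})^{\dagger}=n$. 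Uniqueness of inverses in $N$ then gives $P(n)^{\dagger}=P(n^{\dagger})$. Finally, $P(n)\in N\subseteq N(B)$ follows from part \ref{it:N38-1}.
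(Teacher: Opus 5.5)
Your argument is correct. Part \ref{it:N38-1} is exactly the paper's proof.

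In part \ref{it:N38-2} your route differs slightly. You get $P(n),P(n^{\dagger})\in N$ from \cref{equistat:ibi}, verify the two inverse equations for the pair $P(n),P(n^{\dagger})$, and conclude by uniqueness of inverses in $N$.

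The paper instead applies \cref{lem:P-identities}\ref{it:P-c} to both $n$ and $n^{\dagger}$. This shows they determine a single idempotent $e=P(n)n^{\dagger}=n^{\dagger}P(n)=P(n^{\dagger})n=nP(n^{\dagger})$. Then \cref{lem:P-identities}\ref{it:P-a} gives $P(n)=en=ne$ and $P(n^{\dagger})=en^{\dagger}=n^{\dagger}e$. Hence $P(n)\in N$ and $P(n)^{\dagger}=(en)^{\dagger}=n^{\dagger}e=P(n^{\dagger})$ at once. The paper never checks inverse equations or invokes \cref{equistat:ibi} separately for $n^{\dagger}$. Your route is equally valid, just longer.

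When you write it up, make these cuts:
\begin{itemize}
\item Drop the abandoned line ending in $\cdots$.
\item Drop the preliminary claim $P(n)^{\dagger}=en^{\dagger}$, which your final argument never uses.
\item The commutation $en^{\dagger}=n^{\dagger}e$ from that paragraph is also avoidable. With $f=P(n)n^{\dagger}$ you have $P(n)n^{\dagger}P(n)=fP(n)=f^{2}n=fn=P(n)$ directly from \cref{lem:P-identities}.
\end{itemize}
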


\begin{proof}
	\ref{it:N38-1} It suffices to show that $nI(N)n^{\dagger}\subseteq I(N)$ for
	every $n\in N$. If $e\in I(N)$ then $nen^{\dagger}\in N$ since $N$ is an inverse
	semigroup, and since $n^{\dagger}n$ and $e$ are commuting idempotents,
	\[
	(nen^{\dagger})^{2}=n\,e\,(n^{\dagger}n)\,e\,n^{\dagger}
	=n\,(n^{\dagger}n)\,e^{2}\,n^{\dagger}
	=(nn^{\dagger}n)en^{\dagger}=nen^{\dagger}.
	\]
	Hence $nen^{\dagger}\in I(N)$, and a similar argument gives
	$n^{\dagger}en\in I(N)$.
	
	\ref{it:N38-2} Applying \cref{lem:P-identities}\ref{it:P-c} to $n$ and to $n^{\dagger}$,
	\[
	P(n)n^{\dagger}=P(n)P(n^{\dagger})=P(n^{\dagger})P(n)=P(n^{\dagger})(n^{\dagger})^{\dagger},
	\]
	so $n$ and $n^{\dagger}$ determine the same idempotent
	\[
	e:=P(n)n^{\dagger}=n^{\dagger}P(n)=P(n^{\dagger})n=nP(n^{\dagger})\in I(N).
	\]
	By \cref{lem:P-identities}\ref{it:P-a} we have
	$en=P(n)n^{\dagger}n=P(n)$ and $ne=nn^{\dagger}P(n)=P(n)$; applying
	\cref{lem:P-identities}\ref{it:P-a} to $n^{\dagger}$ gives
	$en^{\dagger}=P(n^{\dagger})nn^{\dagger}=P(n^{\dagger})$ and
	$n^{\dagger}e=n^{\dagger}nP(n^{\dagger})=P(n^{\dagger})$. In particular
	$P(n)=en\in N$, being a product of elements of $N$. Hence
	\[
	P(n)^{\dagger}=(en)^{\dagger}=n^{\dagger}e^{\dagger}=n^{\dagger}e=P(n^{\dagger}),
	\]
	and $P(n)\in N\subseteq N(B)$ by \ref{it:N38-1}.
\end{proof}

Recall that there is a natural partial order on any inverse semigroup given by $s\leq t$ if and only if $s= ss^\dagger t$ (or, equivalently, if and only if $s= ts^\dagger s$). The partial order is preserved by
multiplication and inversion, and the collection of idempotents is a meet semilattice such that the product of two idempotents is their meet. Furthermore, $s\leq t$ if and only if there is an idempotent $e$ such that $s= te$, and this is equivalent to the existence of an idempotent $f$ such that $s= ft$. 

Given an $R$-algebra $A$, we know by \cref{prop:N-in-normalisers}\ref{it:N38-1} that any Cartan inverse semigroup $N$ satisfies $N\subseteq N(B_N)$; thus, Lemma~2.17 in \cite{ACCCLMRSS} holds for Cartan inverse semigroups. As we will use this lemma for later proofs, for self-containment we include it as a corollary and adapt it to our setting below. 

\begin{corollary}\label{lem:orders}
Suppose that $A$ is an $R$-algebra, and $N$ is a Cartan inverse semigroup. Then the following hold.
\begin{enumerate}[label=(\arabic*), font=\normalfont]
    \item If $n\in N$ and $e\in I(B_N)$ satisfy $n\leq e$, then $n\in I(B_N)$.
    \item For $f, e\in I(B_N)$ and $t\in R^\times$, if $0\neq f\leq te$, then $t= 1$.
    \item For $n, m\in N$, we have $nn^\dagger m= n \Longleftrightarrow mn^\dagger n= n$.
    \item For all $n\in N$ and $e\in I(B_N)$, we have $en, ne\leq n$.
    \item If $e\in I(B_N)$ and $n\in N$, then $n- ne, n- en\in N$, with $(n- ne)^\dagger = n^\dagger - en^\dagger$ and $(n- en)^\dagger = n^\dagger - n^\dagger e$, and $n- ne, n- en\leq n$.
\end{enumerate}
\end{corollary}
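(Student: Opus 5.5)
The plan is to prove each item directly from the inverse semigroup structure of $N$, using (N1), (N3), (N4) and (N5) as needed. The paper's own route would be \cite[Lemma~2.17]{ACCCLMRSS} together with \cref{prop:N-in-normalisers}\ref{it:N38-1}, but I will check each item by hand so that only $N\subseteq N(B_N)$, $I(N)=I(B_N)$ and the natural partial order are used.

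For (1), $n\le e$ means $n=nn^\dagger e$. This is a product of idempotents of $N$, hence an idempotent lying in $I(N)=I(B_N)$ by (N1).

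For (2), $f\le te$ with $te\in N$ by (N5) gives $f=ff^\dagger(te)=t\,fe$. Since $fe$ is an idempotent, $f=f^2=t^2 fe\,fe=t^2fe$, and comparing with $f=tfe$ gives $t(t-1)fe=0$. Here $fe\neq0$, since otherwise $f=0$. Then (N3) yields $t(t-1)=0$, and as $t$ is a unit, $t=1$.

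For (3), this is a standard inverse-semigroup fact. Suppose $nn^\dagger m=n$. Then $n\le m$, so $n^\dagger\le m^\dagger$, and hence $mn^\dagger n=n$ by the equivalent form of the partial order. The converse is symmetric.

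For (4), $en=(en)(en)^\dagger n$, which gives $en\le n$ directly. Also $ne=n(n^\dagger n e)$ with $n^\dagger ne$ idempotent, so $ne\le n$ as well.

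Item (5) is the main obstacle, because $N$ is not closed under subtraction and a priori $n-ne$ need not lie in $N$. The plan is to write $n-ne=n(e_0-e)$, where $e_0\in I(N)$ is a local unit for $n$ and $e$ given by (N4). Since $e_0$ and $e$ commute, $e_0-e e_0$ is an idempotent of $B_N$, so it lies in $N$ by (N1). Moreover $n(e_0-ee_0)=n-ne$, and so $n-ne\in N$ as a product in $N$. Similarly, $n-en=(e_0-ee_0)n\in N$.

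For the inverse of $n-ne$, I will check directly that $n^\dagger-en^\dagger$ satisfies both inverse equations, using $n^\dagger(e_0-ee_0)=n^\dagger-n^\dagger e$. Indeed, $(n^\dagger-en^\dagger)^\dagger$ equals $(e_0-ee_0)n^\dagger$, and $(n(e_0-ee_0))^\dagger=(e_0-ee_0)n^\dagger=n^\dagger-en^\dagger$, so uniqueness of inverses in $N$ gives the formula. The case $n-en$ is the same. Finally, $n-ne=n\cdot(\text{idempotent})$, so $n-ne\le n$ follows as in (4).
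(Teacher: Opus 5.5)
Your proof is correct, and it takes a different route from the paper. The paper gives no separate argument. It observes that $N\subseteq N(B_N)$ by \cref{prop:N-in-normalisers}\ref{it:N38-1}, where $N(B_N)$ is an inverse semigroup because $I(B_N)$ is a set of local units (\cref{res: scB}), and then imports \cite[Lemma~2.17]{ACCCLMRSS}.

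That route is shorter, and it is harmless for items (1)--(4): $N$ is an inverse subsemigroup of $N(B_N)$, so $n^\dagger$ and the natural order on $N$ agree with those of $N(B_N)$. For item (5), however, the cited lemma only gives $n-ne,\,n-en\in N(B_N)$, while the corollary asserts membership in $N$. Your factorisations
$n-ne=n(e_0-ee_0)$ and $n-en=(e_0-ee_0)n$, with $e_0-ee_0\in I(B_N)\subseteq N$ by (N1), supply exactly this missing step. It is the same device the paper later uses in \cref{lem:n-P(n)}.

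Your argument is also self-contained, using only (N1), (N3), (N5) and standard inverse-semigroup identities. In fact (N4) is not needed: $e_0=n^\dagger n$ works for $n-ne$, and $e_0=nn^\dagger$ works for $n-en$.

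There are two small slips in (5). The sentence ``$(n^\dagger-en^\dagger)^\dagger$ equals $(e_0-ee_0)n^\dagger$'' should read $n^\dagger-en^\dagger=(e_0-ee_0)n^\dagger$. That identity uses $e_0n^\dagger=n^\dagger$, which you should justify; it follows from $ne_0=n$ by taking inverses. Finally, in (3) the detour through $n^\dagger\le m^\dagger$ is unnecessary, since the claim is just the equivalence of the two standard descriptions of the natural partial order.
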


\section{Properties of a stable conditional expectation} \label{P}
We have two main results in this section. The first, \cref{Punique}, says that an $N$-stable conditional expectation is unique, and the second, \cref{Pfaithful} says that it is also automatically faithful. With these, we demonstrate how semi-Cartan subalgebras generalise quasi-Cartan and graded quasi-Cartan subalgebras in \cref{prop:NB-alg-semiCartan} and \cref{gqc}.

Define a relation $\sqsubseteq$ on the Cartan inverse semigroup $N$ by
\[m \sqsubseteq n \iff \textnormal{there exists }b\in B \textnormal{ such that } m=mb=nb. \] 

\begin{prop}\label{Punique}
    For all $n\in N$, $P(n) \sqsubseteq n$.
    In particular, 
    \begin{equation}\label{maxP}
        P(n)= \max \{b \in B \cap N: b \sqsubseteq n \} \quad\text{(where the max is with respect to $\sqsubseteq$)}.
    \end{equation}
Consequently, the $N$-stable conditional expectation $P$ is unique.
\end{prop}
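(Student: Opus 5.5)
The plan is to work directly from the identities already established in \cref{lem:P-identities} and \cref{prop:N-in-normalisers}. Fix $n\in N$ and set $e:=P(n)n^{\dagger}=n^{\dagger}P(n)\in I(N)$, as in the proof of \cref{prop:N-in-normalisers}\ref{it:N38-2}. From that proof we have $P(n)=en=ne\in N$. Since $e\in I(N)\subseteq B$, we also get
\[
P(n)e=(ne)e=ne=P(n) \quad\text{and}\quad ne=P(n).
\]
So $b:=e$ witnesses $P(n)=P(n)b=nb$, that is, $P(n)\sqsubseteq n$. Moreover $P(n)=ne\in B$, and $P(n)\in N$ by \cref{prop:N-in-normalisers}, so $P(n)\in B\cap N$ and it belongs to the set on the right of \eqref{maxP}.

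For maximality, I will take any $m\in B\cap N$ with $m\sqsubseteq n$, say $m=mb=nb$ for some $b\in B$. Applying $P$ and the bimodule property gives $m=P(m)=P(nb)=P(n)b$, since $m\in B$. It then remains to show $m\sqsubseteq P(n)$, and the same $b$ serves as the witness: $m=mb$ and $m=P(n)b$. Hence every such $m$ satisfies $m\sqsubseteq P(n)$. Together with the first paragraph, this shows $P(n)$ is an element of the set that dominates all others, so it is the maximum. I will not need antisymmetry of $\sqsubseteq$ for this.

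For uniqueness, let $P'$ be another $N$-stable conditional expectation. The set on the right of \eqref{maxP} depends only on $n$, $B$ and $N$, not on the expectation. Then $P(n)\sqsubseteq P'(n)$ and $P'(n)\sqsubseteq P(n)$, and I must deduce $P(n)=P'(n)$. This antisymmetry step is the main obstacle, since a maximum with respect to a preorder need not be unique. My plan is to avoid antisymmetry altogether. Apply the maximality argument with $P'$ to the element $m:=P(n)$, which satisfies $P(n)=P(n)b=nb$ with $b=e$ by the first paragraph. This yields $P(n)=P'(P(n))=P'(nb)=P'(n)b$. Symmetrically, with $e'=n^{\dagger}P'(n)$ we get $P'(n)=P(n)e'$. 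Then
\[
P(n)=P'(n)e=P(n)e'e=P(n)ee'=P(n)e'=P'(n),
\]
using commutativity of $B$ and $P(n)e=P(n)$.

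Finally, $P$ is determined on $N$, and $A=\ls_R(N)$ by (N2). By linearity, $P=P'$.
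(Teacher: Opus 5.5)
Your proof is correct and follows the paper's argument: you use the same witness $n^{\dagger}P(n)$ for $P(n)\sqsubseteq n$ and the same step of applying $P$ to $m=nb$ for maximality. For the witness of $m\sqsubseteq P(n)$ you take $b$ itself, whereas the paper takes $m^{\dagger}mb$, which is not needed. Your explicit uniqueness computation fills in the paper's ``follows directly from \eqref{maxP}''; in fact $\sqsubseteq$ is antisymmetric in general, since $m=mb=m'b$, $m'=m'b'=mb'$ and commutativity of $B$ give $m=mb'b=mbb'=mb'=m'$, and your argument is an instance of this.
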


\begin{proof}
    It follows from \cref{equistat:ibi} that $P(n) \in N$. Take $b:= n^{\dagger}P(n) \in B$. It is straightforward to check that $P(n)=P(n)b=nb$ implying that $P(n)\sqsubseteq n$. 

    Now, suppose that $b \sqsubseteq n$ for some $b\in B\cap N$. Then there exists $b' \in B$ such that $b= bb'=nb'$. Take $b'':= b^{\dagger}bb'$. Then $bb'' = bb' = b$ and 
    \[P(n)b'' = P(n) b'b^{\dagger}b = P(nb')b^{\dagger}b = P(b)b^{\dagger}b = b. \] 
    Hence, $b \sqsubseteq P(n)$ so that $P(n)$ is the maximum $b \in B \cap N$ such that $b \sqsubseteq n$. The uniqueness of the $N$-stable conditional expectation $P$ follows directly from \eqref{maxP}.
\end{proof}

\begin{lem}\label{lem:n-P(n)}
	For each $n\in N$, we have $n-P(n)\in N$.
\end{lem}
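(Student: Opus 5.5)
We reduce the statement to \cref{lem:orders}(5) by writing $P(n)$ as $n$ multiplied by an idempotent of $B_N$. Fix $n\in N$ and, as in the proof of \cref{prop:N-in-normalisers}\ref{it:N38-2}, set
\[
e:=P(n)n^{\dagger}=n^{\dagger}P(n)\in I(N),
\]
where membership in $I(N)$ and the equality of the two expressions come from \cref{lem:P-identities}\ref{it:P-b} and \ref{it:P-c}. \cref{lem:P-identities}\ref{it:P-a} then gives
\[
ne=nn^{\dagger}P(n)=P(n)\qquad\text{and}\qquad en=P(n)n^{\dagger}n=P(n),
\]
which is the idempotent implementation of \cref{equistat:ibi}. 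Since $I(N)=I(B_N)$ by (N1), the element $e$ is an idempotent of $B_N$.

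By \cref{lem:orders}(5), for $e\in I(B_N)$ and $n\in N$ we have $n-ne\in N$. Substituting $ne=P(n)$ yields $n-P(n)\in N$. The same corollary also gives
\[
(n-P(n))^{\dagger}=n^{\dagger}-en^{\dagger}=n^{\dagger}-P(n^{\dagger})
\]
and $n-P(n)\leq n$, although neither is needed for the statement.

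The one point needing care is that \cref{lem:orders} was quoted from \cite[Lemma~2.17]{ACCCLMRSS} rather than proved here, and part (5) is precisely the content of the present lemma. If a self-contained argument is wanted, we check directly that $m:=n-ne$ has inverse $m':=n^{\dagger}-en^{\dagger}$. Expanding $mm'm$ and using $e^{2}=e$, commutativity of idempotents and $nn^{\dagger}n=n$ gives $mm'm=m$, and symmetrically $m'mm'=m'$.

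The harder part is showing that $m$ lies in $N$, not just in $N(B)$. The naive computation does not give this, because $N$ is only closed under products and unit multiples, not differences. The step I would try is to note that $n(n^{\dagger}n-e)$ equals $m$, where $n^{\dagger}n-e=n^{\dagger}n-n^{\dagger}ne$ is an idempotent of $B_N$ (because $e$ and $n^{\dagger}n$ commute). Hence it lies in $I(B_N)\subseteq N$ by (N1). Then $m$ is a product of elements of $N$, so $m\in N$, and (N1) is exactly the hypothesis that makes this work.
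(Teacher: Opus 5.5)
Your self-contained argument is correct and is essentially the paper's proof: the paper writes $n-P(n)=(nn^{\dagger}-P(n)n^{\dagger})n$ and checks directly that $nn^{\dagger}-P(n)n^{\dagger}$ is an idempotent of $B$, hence lies in $N$ by (N1), while you factor on the right as $n(n^{\dagger}n-e)$ with $e=n^{\dagger}P(n)$ and get idempotency from \cref{lem:P-identities} together with $n^{\dagger}ne=e$. Your first appeal to \cref{lem:orders}(5) is acceptable as the paper states it, but the direct factorisation is what actually carries the proof, and the same factorisation proves (5) for an arbitrary $e\in I(B_N)$.
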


\begin{proof}
	We have $n-P(n)=n-P(n)n^\dagger n=(nn^\dagger  -P(n)n^\dagger)n$, and so it suffices to prove that $nn^\dagger  -P(n)n^\dagger\in N$. We claim that $nn^\dagger  -P(n)n^\dagger\in I(B)\subseteq N$. We know that $nn^\dagger  -P(n)n^\dagger\in B$, and so we only need to show that this element is an idempotent. We use $P(n)n^\dagger\in B$ to get 
	\begin{align*}
	(nn^\dagger  -P(n)n^\dagger)^2&=nn^\dagger  -P(n)n^\dagger  - P(n)n^\dagger +P(n)n^\dagger P(n)n^\dagger\\
	&= nn^\dagger  -P(n)n^\dagger  - P(n)n^\dagger +P(P(n)n^\dagger n)n^\dagger\\
	&= nn^\dagger  -P(n)n^\dagger  - P(n)n^\dagger +P(P(n))n^\dagger\\
	&= nn^\dagger  -P(n)n^\dagger  - P(n)n^\dagger +P(n)n^\dagger\\
	&= nn^\dagger  -P(n)n^\dagger.
	\end{align*}
	So the claim holds, and the result follows.
\end{proof}

Unlike in quasi-Cartan subalgebras (\cite[Definition 3.3]{ACCCLMRSS}), conditional expectations associated to a semi-Cartan subalgebra are not assumed to be faithful; nevertheless, faithfulness is automatic in the sense that
\begin{equation}\label{eq:Pfaithful}
	a\in A,\, a\not=0\implies \text{there exists } m\in N\text{ with }P(ma)\not=0.
\end{equation} 

\begin{prop}\label{Pfaithful}
    The stable conditional expectation $P$ in \cref{defn: cis} is faithful. 
\end{prop}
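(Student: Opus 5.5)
We argue by contradiction and induction on the length of an expression of $a$. By (N2), every nonzero $a\in A$ can be written as $a=\sum_{i=1}^k r_in_i$ with $r_i\in R$ and $n_i\in N$. We induct on the minimal such $k$. Since $I(N)\subseteq N$ and $P(eb)=eP(b)$ for $e\in I(N)$, multiplying on the left by idempotents of $N$ is always allowed. So the aim is to cut $a$ into pieces by idempotents, each piece having a shorter expression or being directly detectable.

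\emph{Base case} $k=1$. Here $a=rn\neq 0$. Take $m=n^{\dagger}$. Then $P(n^{\dagger}a)=rn^{\dagger}n$. If this vanished, then $rn=rnn^{\dagger}n=0$, a contradiction.

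\emph{Inductive step, splitting $a$.} Suppose $P(ma)=0$ for all $m\in N$, and set $n:=n_1$. For each $i$, the element $n^{\dagger}n_i\in N$ gives $P(n^{\dagger}n_i)=n^{\dagger}n_ie_i$ with $e_i\in I(N)$, by \cref{equistat:ibi}. By \cref{lem:n-P(n)}, $n^{\dagger}n_i-P(n^{\dagger}n_i)\in N$. Left multiplication by $n$ then gives a splitting
\[
nn^{\dagger}n_i=nP(n^{\dagger}n_i)+\bigl(nn^{\dagger}n_i-nP(n^{\dagger}n_i)\bigr),
\]
with both summands in $N$. Using \cref{lem:orders}(5) with the idempotent $nn^{\dagger}$, we can also write $n_i=nn^{\dagger}n_i+(n_i-nn^{\dagger}n_i)$ inside $N$. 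Put $a':=nn^{\dagger}a$ and $a'':=a-a'$. Then $a''$ is a combination of the $n_i-nn^{\dagger}n_i$ with $i\ge2$, because $n-nn^{\dagger}n=0$. So $a''$ has length $<k$, and $P(ma'')=P(ma)-P(mnn^{\dagger}a)=0$ for all $m$. Induction forces $a''=0$, hence $a=a'=nn^{\dagger}a$.

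\emph{Inductive step, analysing $a'$.} Now $a=n c$ with $c:=n^{\dagger}a=\sum_i r_in^{\dagger}n_i$, and $c\neq0$. Each $n^{\dagger}n_i$ splits as $P(n^{\dagger}n_i)+d_i$ with $d_i:=n^{\dagger}n_i-P(n^{\dagger}n_i)\in N$ and $P(d_i)=0$. Since $P(n^{\dagger}n_i)\in B$, we get $c=b+\sum_i r_id_i$ with $b:=P(c)=P(n^{\dagger}a)=0$. So $c=\sum_i r_id_i$, and every $m\in N$ satisfies $P(mc)=P(mn^{\dagger}a)=0$. In particular $d_1=n^{\dagger}n-n^{\dagger}n=0$, so $c$ has an expression of length $\le k-1$. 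Induction gives $c=0$, hence $a=nc=0$, the desired contradiction.

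\emph{Expected obstacle.} The step I expect to need the most care is this last bookkeeping. I must check that $d_1=0$ exactly, which uses $P(n^{\dagger}n)=n^{\dagger}n$, so that the length genuinely drops. I must also check that the hypothesis ``$P(m\,\cdot\,)=0$ for all $m\in N$'' passes to $c=n^{\dagger}a$ and to $a''$, which holds because $N$ is closed under products and contains $I(N)$. One subtlety remains: the inductive hypothesis must be phrased over all elements of length $<k$, not just those derived from $a$. With that formulation, the argument closes.
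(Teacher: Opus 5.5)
Your proof is correct, and its core step is the paper's. After the identical base case, you shorten the expression by passing to $n^{\dagger}a$ and subtracting $P(n^{\dagger}a)=0$. This gives $n^{\dagger}a=\sum_i r_i\bigl(n^{\dagger}n_i-P(n^{\dagger}n_i)\bigr)$, which has a shorter expression by \cref{lem:n-P(n)} and $P(n^{\dagger}n)=n^{\dagger}n$.

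The real difference is how you make sure nothing is lost in passing from $a$ to $n^{\dagger}a$:
\begin{itemize}
\item The paper shows directly that $n_j^{\dagger}a\neq 0$ for some $j$, because the join $\bigvee_j n_jn_j^{\dagger}$ dominates every $n_in_i^{\dagger}$ and so fixes $a$. It then renumbers.
\item You fix $n=n_1$ and apply the inductive hypothesis a second time, to $a-nn^{\dagger}a=\sum_{i\ge 2}r_i(n_i-nn^{\dagger}n_i)$. This is shorter by \cref{lem:orders}(5), and you conclude $a=nn^{\dagger}a$.
\end{itemize}
Your route avoids joins of idempotents and never has to choose which $n_j$ to use. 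The paper's route is direct: it produces the witness $m$ explicitly (either $n_j^{\dagger}$ or $m'n_j^{\dagger}$) instead of arguing by contradiction.

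The displayed splitting of $nn^{\dagger}n_i$ in your inductive-step paragraph is never used and can be dropped.
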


\begin{proof}
We adapt the argument in the proof of Theorem~11.10 of \cite{BCLM} and use an induction argument. Note that the join of two idempotents $e$ and $f$ is $e\vee f:=e+f-ef$, and we extend this to the join of finitely many idempotents in the natural way.

First suppose $a=rn\not= 0$ for $r\in R$ and $n\in N$. Then $rn^\dagger n\not =0$. For $m=n^\dagger\in N$, we have 
	\[
	P(ma)=P(n^\dagger rn)= r P(n^\dagger n)=rn^\dagger n\not =0.
	\]
	Assume that \eqref{eq:Pfaithful} holds for all nonzero elements of $N_k:=\left\{\sum_{i=1}^kr_in_i: r_i\in R, n_i\in N\right\}$, and let $a:=\sum_{i=1}^{k+1}r_in_i$ be a nonzero element of $N_{k+1}$. We claim that $n_j^\dagger a\not=0$ for some $1\le j\le k+1$. To see this, assume for contradiction that $n_j^\dagger a=0$ for all $1\le j\le k+1$. Then we have $\left(\bigvee_{j=1}^{k+1}n_jn_j^\dagger\right)a=0$, and since $\bigvee_{j=1}^{k+1}n_jn_j^\dagger$ dominates each $n_in_i^\dagger$ we get 
	\begin{align*}
	a=\sum_{i=1}^{k+1}r_in_i=\sum_{i=1}^{k+1}n_in_i^\dagger r_in_i &= \sum_{i=1}^{k+1}\left(\bigvee_{j=1}^{k+1}n_jn_j^\dagger\right)n_in_i^\dagger r_in_i\\
	&= \left(\bigvee_{j=1}^{k+1}n_jn_j^\dagger\right)\sum_{i=1}^{k+1}n_in_i^\dagger r_in_i\\
	&= \left(\bigvee_{j=1}^{k+1}n_jn_j^\dagger\right)a\\
	&= 0.
\end{align*}
Since this contradicts that $a\not=0$, the claim holds. 

By potentially renumbering we can assume that $n_{k+1}^\dagger a\not=0$. If $P(n_{k+1}^\dagger a)\not=0$, then we choose $m=n_{k+1}^\dagger$ in \eqref{eq:Pfaithful} and we are done. So assume that $P(n_{k+1}^\dagger a)=0$. Then using that $n_{k+1}^\dagger n_{k+1}-P(n_{k+1}^\dagger n_{k+1})=0$ for the last equality we get
\[
n_{k+1}^\dagger a = n_{k+1}^\dagger a-P(n_{k+1}^\dagger a)=\sum_{i=1}^{k+1}r_i(n_{k+1}^\dagger n_i-P(n_{k+1}^\dagger n_i))=\sum_{i=1}^k r_i(n_{k+1}^\dagger n_i-P(n_{k+1}^\dagger n_i)),
\]
and we know from \cref{lem:n-P(n)} that each $n_{k+1}^\dagger n_i-P(n_{k+1}^\dagger n_i)\in N$. Hence $n_{k+1}^\dagger a\in N_k$. By the inductive hypothesis we know that there is $m'\in N$ such that $P(m'n_{k+1}^\dagger a)\not=0$, and then taking $m=m'n_{k+1}^\dagger$ shows that $a$ satisfies \eqref{eq:Pfaithful}. Since $A=\operatorname{span}_R(N)=\bigcup_{k=1}^\infty N_k$, we see that $P$ is faithful. 
\end{proof}

In our next proposition, we consider the relationship between the semigroup of normalisers in quasi-Cartan subalgebras and Cartan inverse semigroups.  

\begin{prop}\label{prop:NB-alg-semiCartan}
    Let $R$ be a commutative unital ring, $A$ be an $R$-algebra, and $B\subseteq A$ a commutative subalgebra satisfying Condition (WT). Then $N:=N(B)$ forms a Cartan inverse semigroup with semi-Cartan subalgebra associated to $N$ equal to $B$ if and only if the pair $(A,B)$ satisfies all the properties of being an algebraic quasi-Cartan pair without the assumption that the conditional expectation $P:A\to B$ is faithful. 
\end{prop}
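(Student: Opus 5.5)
We prove the two implications separately. Throughout, ``quasi-Cartan without faithfulness'' means conditions \ref{paircon1}--(iii), the existence of a conditional expectation $P\colon A\to B$ (not assumed faithful), and $P$ implemented by idempotents on all of $N(B)$.

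For the forward direction, assume $N:=N(B)$ is a Cartan inverse semigroup with $B_N=B$. Then $B=\ls_R(I(N))$. By \cref{res: scB}\ref{scB3}, $I(B)=I(B_N)$ forms a set of local units for $A$, which is \ref{paircon1}. Condition (ii) is \cref{res: scB}\ref{scB1}, and (iii) is (N2). Condition (N6) supplies a conditional expectation $P$ that is $N(B)$-stable. By \cref{equistat:ibi}, this $P$ is implemented by idempotents of $I(N)=I(B)$ on every element of $N=N(B)$, which is exactly the implementation condition in the definition of an algebraic quasi-Cartan pair.

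For the converse, assume $(A,B)$ satisfies all the quasi-Cartan axioms except possibly faithfulness. Since $I(B)$ is a set of local units, \cite[Lemma~2.15]{ACCCLMRSS} shows that $N:=N(B)$ is an inverse semigroup with $I(N)=I(B)$. Hence $B_N=\ls_R(I(N))=\ls_R(I(B))=B$ by (ii). We then check (N1)--(N6) in turn.
\begin{enumerate}
\item[(N1)] This follows from $I(B_N)=I(B)=I(N)\subseteq N$.
\item[(N2)] This is condition (iii).
\item[(N3)] This is (WT), because $I(N)=I(B)$.
\item[(N5)] For $t\in R^\times$ and $n\in N$ with partner $m$, the pair $tn$, $t^{-1}m$ satisfies \eqref{normaliser}, so $tn\in N$.
\item[(N6)] By hypothesis $P$ is implemented by idempotents on $N(B)=N$, so by \cref{equistat:ibi} it is $N$-stable.
\end{enumerate}
One point needs care: \cref{equistat:ibi} assumes (N1)--(N5), so we establish those before invoking it.

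The only step needing real work is (N4), since \ref{paircon1} only gives local units for finitely many elements of $A$. Given $n_1,\dots,n_k\in N$, we apply \ref{paircon1} to the finite set $\{n_1,\dots,n_k\}$. This yields $e\in I(B)=I(N)$ with $en_i=n_i=n_ie$ for each $i$, which is exactly (N4). So (N4) is immediate after all.

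The expected obstacle is therefore only bookkeeping: confirming that the inverse $m$ in \eqref{normaliser} is the inverse-semigroup inverse $n^\dagger$, so that the definitions match. This follows from uniqueness of inverses in the inverse semigroup $N(B)$ given by \cite[Lemma~2.15]{ACCCLMRSS}.
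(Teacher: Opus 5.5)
Your proof is correct and takes essentially the same route as the paper: both directions rest on $I(N(B))=I(B)$ (via \cite[Lemma~2.15]{ACCCLMRSS}) and on \cref{equistat:ibi} to pass between $N$-stability and implementation by idempotents. The differences are minor: in the converse you use the hypothesised expectation directly where the paper cites \cite[Proposition~3.7]{ACCCLMRSS}, and in the forward direction the paper also records faithfulness via \cref{Pfaithful}, which the statement as phrased does not require.
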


\begin{proof}
It follows from the definition of a Cartan inverse semigroup $N:=N(B)$ that $I(B)=I(N)$. Hence all the required conditions for the pair $(A,B)$ to be algebraic quasi-Cartan are immediate except for showing that the stable conditional expectation $P:A \to B$ is faithful and implemented by idempotents. However, by \cref{Pfaithful}, $P$ is faithful and by \cref{equistat:ibi}, $P$ is implemented by idempotents, as required. 

Conversely, to see that $N:=N(B)$ is a Cartan inverse semigroup, note that $I(B)=I(N(B))=I(N)$ by \cite[Lemma~2.15]{ACCCLMRSS} so that
$B_N:= \ls_R(I(N)) = \ls_R(I(B))= B$.
Now, all the required conditions are immediate except for \eqref{Nstable}. By \cite[Proposition~3.7]{ACCCLMRSS}, there is a unique conditional expectation $P$ that is implemented by idempotents. Thus, by \cref{equistat:ibi}, $P$ is $N$-stable, completing the proof.
\end{proof}

Combining \cref{Pfaithful} and \cref{prop:NB-alg-semiCartan}, the following corollary is immediate and tells us that we can remove `faithfulness' from the definition of an algebraic quasi-Cartan pair; it comes for free.

\begin{corollary}
    \label{cor:uniquePforaqcp} The pair $(A,B)$ is an algebraic quasi-Cartan pair if and only if
 $(A,B)$ satisfies all the properties of an algebraic quasi-Cartan pair without the assumption that the conditional expectation $P:A\to B$ is faithful.
\end{corollary}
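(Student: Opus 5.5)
The forward implication is immediate: an algebraic quasi-Cartan pair satisfies every condition in the list, and the list merely omits faithfulness of $P$.

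The substance is the reverse implication. Suppose $(A,B)$ satisfies conditions \ref{paircon1}--\ref{paircon4} except faithfulness, and that there is a conditional expectation $P\colon A\to B$ implemented by idempotents. We need the standing hypotheses of \cref{prop:NB-alg-semiCartan}: $B$ is a commutative subalgebra satisfying Condition \pref{WT}. These are part of the setup of an algebraic pair. We can therefore apply the converse direction of \cref{prop:NB-alg-semiCartan}, which shows that $N:=N(B)$ is a Cartan inverse semigroup with associated semi-Cartan subalgebra $B_N=B$.

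Concretely, this direction uses the following facts.
\begin{itemize}
\item $I(N(B))=I(B)$, by \cite[Lemma~2.15]{ACCCLMRSS}, since $I(B)$ forms local units. This gives (N1) and (N4).
\item $B=\ls_R(I(B))$ and $A=\ls_R(N(B))$ give $B_N=B$ and (N2).
\item Condition \pref{WT} gives (N3).
\item (N5) is clear, because $t\in R^\times$ and $n\in N(B)$ imply $tn\in N(B)$, with $t^{-1}m$ as the partner of $tn$.
\item Since $P$ is implemented by idempotents on all of $N(B)=N$, \cref{equistat:ibi} shows that $P$ is $N$-stable, which is (N6).
\end{itemize}

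It remains to show that $P$ is faithful in the sense of condition \ref{paircon4}. By \cref{Punique}, the $N$-stable conditional expectation is unique, so it coincides with our given $P$. By \cref{Pfaithful}, this $P$ is faithful: for every nonzero $a\in A$ there is $m\in N=N(B)$ with $P(ma)\neq 0$, which is exactly condition \ref{paircon4}. Hence $(A,B)$ is an algebraic quasi-Cartan pair.

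The only point needing care is to make sure the $P$ shown to be faithful is the given $P$, and not some other expectation. This is settled either by the uniqueness in \cref{Punique} or by observing that \cref{Pfaithful} applies to any $N$-stable expectation, in particular to the given one. Apart from this, the argument is a direct combination of \cref{prop:NB-alg-semiCartan} and \cref{Pfaithful}.
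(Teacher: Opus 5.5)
Your proposal is correct and matches the paper's argument. The converse direction of \cref{prop:NB-alg-semiCartan} makes $N(B)$ a Cartan inverse semigroup with $B_{N(B)}=B$ and the given $P$ $N(B)$-stable via \cref{equistat:ibi}, and then \cref{Pfaithful} gives exactly the faithfulness required in condition \ref{paircon4}; as you note, the appeal to \cref{Punique} is unnecessary, since \cref{Pfaithful} applies directly to the given $P$.
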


%%%%%%%%%%%%%%

In our next example,  we consider the graded quasi-Cartan pairs of \cite{CNV25}.
\begin{example}\label{gqc}
Suppose that $\Gamma$ is a group with identity $\id$
and $A=\bigoplus_{\gamma \in \Gamma} A_\gamma$ is a graded $R$-algebra. Let $(A,B)$ be a graded algebraic quasi-Cartan pair (as in \cite{CNV25}), and consider $N:=N_\star(B)$, which is the set of homogeneous normalisers of $B$. Since $I(N_\star(B))=I(B)$, it follows that $B_N$ is equal to $B$, implying that $\ls(I(N_\star(B)))=\ls(I(B))=B$. All the required conditions for $N$ to be a Cartan inverse semigroup are straightforward to check except for Condition \eqref{Nstable}. But the map $P:= P' \circ \Phi_\id$ defines an $N$-stable conditional expectation, where $P':A_\id \to B$ is the conditional expectation implemented by idempotents and $\Phi_\id: A\to A_\id$ is defined by $a= \sum_{\gamma\in \Gamma} a_\gamma \mapsto a_\id$.  
\end{example}

\begin{example} 
Recall from \cite[Theorem~8.7]{ACCCLMRSS} that the subalgebra $A_R(G^{(0)})$ is a quasi-Cartan subalgebra of a twisted Steinberg algebra $A_R(G;\Sigma)$ if and only if the twist satisfies the local bisection hypothesis.  This restricts the twists under consideration in \cite{ACCCLMRSS}.  For example, there are group algebras that do not satisfy the local bisection hypothesis. In the setting of complex group algebras, finite cyclic groups with prime order have normalisers of the subalgebra $\mathbb{C}\delta_{e}$ that have support equal to the entire group, see \cite[Lemma~5.2]{ABCCLMR}.  

In such groups, the semigroup of normalisers of $\mathbb{C}\delta_{e}$ in $\mathbb{C}[G]$ is not a Cartan inverse semigroup.  There are a few different ways to see this, for example, it is a consequence of \cref{prop:NB-alg-semiCartan}.  As a reality check,  in the example below, we demonstrate directly that the conditional expectation is not $N$-stable.  

Let $R$ be some field that is not of characteristic $2$ or $3$. Let $G=C_2=\langle a \mid a^2=e\rangle$, and take $A$ to be the untwisted group ring $R[C_2]$, $B=R\delta_e$ and $P$ be the restriction of `functions' in $R[C_2]$ to $B$.  Then we know from \cite[Example~3.9]{ACCCLMRSS} that $N(B)=\{0\}\cup A^\times$.  If we let $n:=2\delta_e+\delta_a$, then $n\in N(B)$ with $n^\dagger=\frac{1}{3}(2\delta_e-\delta_a)$ because 
\[
n^\dagger n = \frac{1}{3}(2\delta_e-\delta_a)(2\delta_e+\delta_a)=\frac{1}{3}(4\delta_e-\delta_{a^2})=\delta_e=nn^\dagger.
\]
But 
\[
P(n)n^\dagger = 2\delta_e\left(\frac{1}{3}(2\delta_e-\delta_a)\right)=\frac{4}{3}\delta_e-\frac{2}{3}\delta_a\not\in B
\]
demonstrating that $P$ is not $N$-stable.
However, as a special case of \cref{prop:twoNs}, we do have the Cartan inverse semigroup
\[
N:=\{0\}\cup\{t\delta_g : t\in R^\times,\, g\in C_2\} \subseteq N(B)
\] and the restriction of $P$ to $N$ is stable.  
(Note that $N$ is just $N_0=\{\til_X : X\text{ a compact open bisection of }\Sigma\}$.)  Group algebras also fit into the context of graded quasi-Cartan pairs as in \cref{gqc} (see \cite[Example~8.1]{CNV25}).
\end{example}

\section{From Cartan inverse semigroups to twisted Steinberg algebras}\label{sec:main}

This section is devoted to proving the main theorem of the paper:

\begin{thm}\label{thm:reconstruct}
    Let $R$ be an indecomposable commutative unital ring. Let $A$ be an $R$-algebra and $B_N\subseteq A$ be the semi-Cartan subalgebra associated to a Cartan inverse semigroup $N$. Then there is a discrete $R$-twist $\Sigma \twoheadrightarrow G$ over an ample Hausdorff groupoid $G$ and an isomorphism $\varphi: A \to A_R(G;\Sigma)$ which maps $B_N$ onto the Steinberg algebra $A_R(G^{(0)})$. Furthermore, if the Cartan inverse semigroup $N$ is summable, i.e., for all $m, n\in N$, 
    \begin{equation}\label{defn: summable}
    m^{\dagger}n , mn^{\dagger} \in B_N \implies m+n \in N, \tag{Summable}
\end{equation}
    then $\varphi(N)= N_0$, as in \cref{N0}.
\end{thm}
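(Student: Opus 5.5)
We follow the construction of \cite{ACCCLMRSS}, replacing the semigroup of normalisers $N(B)$ by $N$ throughout, and using the facts already established for $N$: \cref{res: scB}, \cref{prop:N-in-normalisers}, \cref{lem:orders}, \cref{Punique} and \cref{Pfaithful}.

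\emph{Step 1: the unit space and the groupoid $G$.} Let $X$ be the Stone space of the Boolean ring $I(B)$, that is, the space of ultrafilters (equivalently, characters $B\to R$ with values in $\{0,1\}$; since $R$ is indecomposable every character sends idempotents to $0,1$). By (N3) and \cref{res: scB} we have $B\cong A_R(X)$ via $e\mapsto 1_{\widehat e}$. Each $n\in N$ induces a partial homeomorphism $\alpha_n\colon \widehat{n^\dagger n}\to\widehat{nn^\dagger}$, determined by $\alpha_n(x)(e)=x(n^\dagger e n)$; this uses \cref{prop:N-in-normalisers}\ref{it:N38-1}. This gives an action of $N$ on $X$, and we let $G$ be its groupoid of germs. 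Because $tn$ has the same germ as $n$ for $t\in R^\times$, the $R^\times$-scalars are quotiented out in $G$.

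\emph{Step 2: the twist $\Sigma$.} Let $\Sigma$ be the set of pairs $[n,x]$ with $n\in N$ and $x\in\widehat{n^\dagger n}$, modulo the relation $[n,x]=[m,x]$ if there is $e\in I(N)$ with $x(e)=1$ and $ne=me$. The topology is generated by the sets $\{[n,x]: x\in \widehat{e}\}$, and $R^\times$ acts by $t\cdot[n,x]=[tn,x]$, using (N5). The map $q\colon\Sigma\to G$ is $[n,x]\mapsto$ germ. We verify the conditions of \cref{prop:twistdefTristan}. Freeness of the action follows from \cref{lem:orders}(2): if $tne=ne\neq 0$ with $t\ne1$, a contradiction arises. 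Fibre-transitivity needs that two elements of $N$ with the same germ agree locally up to a unit scalar. Here the conditional expectation enters: $P(m^\dagger n)=m^\dagger n f$ for some $f\in I(N)$ by \cref{equistat:ibi}, which separates the locus where $m^\dagger n$ is locally idempotent from the locus where it is not. Hausdorffness of $G$ also comes from $P$: the set where two germs agree is $\widehat{f}$ for the idempotent $f$ implementing $P(m^\dagger n)$, which is clopen. Openness of $\Sigma^{(0)}$ likewise uses that $P(n)\in N$ (\cref{prop:N-in-normalisers}\ref{it:N38-2}).

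\emph{Step 3: the isomorphism.} Define $\varphi(n)=\tilde1_{U_n}$ with $U_n=\{[n,x]\}$ a compact open bisection, and extend $R$-linearly. This extension is the main obstacle, because it must be well defined on $\ls_R(N)$ despite linear relations. Following \cite[Section~6]{ACCCLMRSS}, I would first build a map $A\to A_R(G;\Sigma)$ by
\[
\varphi(a)([n,x]) = x\bigl(P(n^\dagger a)\bigr),
\]
which is manifestly linear, and then check that it agrees with $\tilde1_{U_m}$ on $a=m$ and is contravariant. Injectivity is exactly \cref{Pfaithful}. Multiplicativity on $N$ follows from $U_mU_n=U_{mn}$. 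Surjectivity holds since the $U_n$ together with their $R^\times$-translates form a basis of compact open bisections, so the $\tilde1_{U_n}$ span. Finally, $\varphi(B)=A_R(G^{(0)})$ because idempotents map to the indicators of $\widehat e$.

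\emph{Step 4: the summable case.} We always have $\varphi(N)\subseteq N_0$. For the converse, every compact open bisection $V$ of $\Sigma$ is a finite disjoint union $\bigsqcup_i U_{n_i}$, which we may take with orthogonal source and range projections after multiplying by idempotents using \cref{lem:orders}(5). Orthogonality gives $n_i^\dagger n_j=0=n_in_j^\dagger\in B$ for $i\neq j$, so by induction with \eqref{defn: summable} the sum $\sum_i n_i$ lies in $N$. Its image is $\tilde1_V$, and hence $\varphi(N)=N_0$.
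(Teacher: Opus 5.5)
There is a genuine gap in Steps~1--2, in how you define $G$. For $\Sigma$ you use the germ relation $ne=me$ for some $e$ with $x(e)=1$. Under that relation, $tn$ and $n$ do \emph{not} have the same germ when $t\neq 1$, by (N3). So your claim that the scalars are quotiented out in $G$ forces $G$ to be the groupoid of germs of the partial homeomorphisms $\alpha_n$ of $X$.

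Such a groupoid is always effective, so it cannot recover the non-effective groupoids that this framework is designed to include. Concretely, take $A=R[C_2]$ and $N=\{0\}\cup\{t\delta_g : t\in R^\times,\ g\in C_2\}$, which is a Cartan inverse semigroup by \cref{prop:twoNs}. Then:
\begin{itemize}
\item $I(B)=\{0,\delta_e\}$, so $X$ is a single point $x$;
\item every $\alpha_n$ is the identity, so your $G$ is trivial;
\item your $\Sigma=\{[t\delta_g,x]\}\cong R^\times\times C_2$.
\end{itemize}
The single fibre of $q$ therefore consists of two $R^\times$-orbits, and condition (3) of \cref{prop:twistdefTristan} fails.

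The conditional expectation cannot repair this. We have $P(\delta_e^\dagger\delta_a)=P(\delta_a)=0$, so the implementing idempotent is $f=0$ and $\widehat f=\emptyset$, yet $\alpha_{\delta_e}$ and $\alpha_{\delta_a}$ have the same germ at $x$. Hence two of your claims are false in general: that elements with the same germ agree locally up to a unit scalar, and that the agreement set of two germs is $\widehat f$. What $P$ detects is where $m^\dagger n$ is locally a unit multiple of an idempotent. That set coincides with where $\alpha_{m^\dagger n}$ is locally the identity only under an effectiveness hypothesis of Cartan/diagonal type, which is exactly what this paper avoids.

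The fix is to define $G:=\Sigma/R^\times$. That is, set $q[n,x]=q[m,x]$ if and only if $ne=tme$ for some $t\in R^\times$ and some $e\in I(N)$ with $x(e)=1$. Your $\Sigma$ is Exel's germ groupoid for $N$ acting on the Stone space of the generalized Boolean algebra $I(N)$. It is identified with the paper's ultrafilter groupoid via $[n,x]\mapsto\{ne : x(e)=1\}^{\uparrow}$, and this $G$ is then exactly the paper's quotient by $U\simeq tW$.

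With this definition, fibre-transitivity holds by construction. The real use of $P$ is to show that the quotient is Hausdorff, through $P(n)=nf\in N\cap B_N$ as in \cite[Theorem~5.10]{ACCCLMRSS}. Openness of $\Sigma^{(0)}$ is automatic in a germ groupoid and needs nothing from $P$.

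After that change, your Steps~3 and~4 match the paper's proof: $\varphi(a)([n,x])=x(P(n^\dagger a))$ is the paper's $\widehat a$, and injectivity is \cref{Pfaithful}. One further point on surjectivity: it is not enough that the $U_n$ form a basis. You need every compact open bisection to be a \emph{finite disjoint union} of sets $U_{n_i}$. That uses closure of $N$ under $n\mapsto n-ne$ (\cref{lem:orders}(5)) together with \cite[Lemma~6.4]{ACCCLMRSS}, as you implicitly do in Step~4.
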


The structure of our proof of \cref{thm:reconstruct} parallels the proof of \cite[Theorem~6.6]{ACCCLMRSS}. First, we construct the groupoid $\Sigma$ of ultrafilters of the Cartan inverse semigroup $N$, and the groupoid $G$ which will be the quotient of $\Sigma$ with respect to an equivalence relation defined in \cref{quotientaction}. We claim that $G$ is Hausdorff and ample, and $\Sigma \twoheadrightarrow G$ is a discrete $R$-twist.

We first recall the definitions of a filter and of an ultrafilter.
For $U\subseteq N$, the \textit{upclosure} of $U$ is the set
  	\[ U\up:= \{m\in N: \text{there exists $n\in U$ with $n\leq m$}\}.\]
 A \textit{filter} of $N$ is a subset $U \subseteq N\backslash \{0\}$ such that $U=U\up$, and whenever $m,n \in U$, there exists $l\in U$ such that $l\leq m,n$. An \textit{ultrafilter} is a maximal filter with respect to $\subseteq$. 

 Let $\Sigma$ be the set of all ultrafilters of $N$. With the following properties, $\Sigma$ is a groupoid (see \cite[Proposition~9.2.1]{Lawson} and \cite[Proposition 2.13]{Lawson2}).
 \begin{enumerate}[label=(\arabic*), font=\normalfont]
      \item For each $U\in \Sigma$, $U^{-1}:= \{ n^{\dagger} : n \in U\}$.
      \item A pair $(U,V)$ of ultrafilters of $N$ is composable if and only if $m^{\dagger}mnn^{\dagger} \neq 0$ (or, equivalently, $mn\neq 0$) for all $m \in U$ and $n\in V$, and  $UV:= \{mn:m \in U, n\in V \}\up.$ Equivalently, a pair $(U,V)$ of ultrafilters of $N$ is composable if and only if $\source(U):=U^{-1}U$ is equal to $\range(V):= VV^{-1}$.    
      \item The unit space of $\Sigma$ is given by $\Sigma^{(0)}= \{U \in \Sigma : U \cap I(B_N) \neq \emptyset\}$.
  \end{enumerate}
  
 For $n\in N$, we denote \[\mathcal{V}_n:= \{ U \in \Sigma: n \in U \}.\] The collection $\{\mathcal{V}_n: n \in N \}$ forms a basis of compact open bisections for a topology on $\Sigma$, making it an \'{e}tale groupoid; moreover, it is ample by \cite[Lemma 5.5]{ACCCLMRSS}. For any $n,m \in N$, we have $\mathcal{V}_n\mathcal{V}_m=\mathcal{V}_{nm}$ and $\mathcal{V}_n^{-1}=\mathcal{V}_{n^{\dagger}}$ (see \cite[Lemma~3.2]{germs} for more details). By \cite[Lemma~2.22]{Lawson3} (see also \cite[Propositions~2.2 and 4.4]{germs}), $\Sigma^{(0)}$ is a Hausdorff subspace of $\Sigma$. Moreover, one can check that various properties about $\Sigma$ in \cite[Section~5.1]{ACCCLMRSS} hold true in this setting; in particular, for $t\in R^{\times}$ and $U \in \Sigma$, we have $tU=\{tm: m\in U \} \in \Sigma$ and $\source(tU)= \source(U)$.

Following the construction in \cite{ACCCLMRSS}, the condition below defines an equivalence relation on $\Sigma$ 
 	\begin{equation} \label{quotientaction}
 	    U\simeq W \iff \text{there exists $t\in R^{\times}$ such that $U=tW$.}
	\end{equation}
Let $G$ be the quotient $\Sigma/{ \simeq}$ of $\Sigma$ with respect to the equivalence relation $\simeq$ defined in \eqref{quotientaction}. We denote the corresponding quotient map by $q:\Sigma \to G$ and the equivalence class of $U\in \Sigma$ by $q(U)$. Applying the techniques used in \cite[Section~5.2]{ACCCLMRSS}, one can see that $G:= \{q(U): U \in \Sigma\}$ is a groupoid with inversion given by $q(U)^{-1}=q(U^{-1})$, composable pairs $G^{(2)}:=\{ (q(U), q(W)): (U,W) \in \Sigma^{(2)}\}$ and the composition $q(U)q(W)=q(UW)$ for all $(U,W) \in \Sigma^{(2)}$. Furthermore, for any $U\in \Sigma$, we have $\source(q(U))=q(\source(U))$ and $\range(q(U))= q(\range(U))$; thus, $G^{(0)}=q(\Sigma^{(0)})$. The collection $\{q(\mathcal{V}_n): n\in N\}$ forms a basis of compact open bisections for the quotient topology on $G$; in particular, $G$ is an ample groupoid. 

Define $i: G^{(0)} \times R^{\times} \to \Sigma$ by $i(q(U),t)= tU$ for $U\in \Sigma^{(0)}$ and $t\in R^{\times}$. Suppose that $B_N\subseteq A$ is a semi-Cartan subalgebra associated to the Cartan inverse semigroup $N$. For any $n\in N$, by \cref{equistat:ibi}, there exists $f\in I(N)= I(B_N)$ such that $P(n)= nf= fn\in N\cap B_N$; thus, following the proof of \cite[Theorem 5.10]{ACCCLMRSS}, $G$ is Hausdorff. Moreover, following the proof of \cite[Theorem 5.6]{ACCCLMRSS}, the sequence 
     \[
 		G^{(0)} \times R^{\times} \overset{i} \hookrightarrow \Sigma \overset{q} \twoheadrightarrow G 
 	\] 
     is a discrete $R$-twist.

To prove \cref{thm:reconstruct}, it remains to show that there is an isomorphism from the $R$-algebra $A$ onto the twisted Steinberg algebra $A_R(G; \Sigma)$ which maps $B_N$ onto $A_R(G^{(0)})$; furthermore, if $N$ is summable then $N$ is mapped onto $N_0$.

Note that when $R$ is indecomposable, the idempotents of $A_R(G^{(0)})$ are characteristic functions of compact open subsets of $G^{(0)}$. Moreover, since $B_N$ satisfies \pref{WT} by \cref{res: scB}, the map $\Sigma^{(0)} \supseteq U \mapsto \tilde{1}_U\in A_R(G;\Sigma)$, where $\spp(\tilde{1}_U)\subseteq R^\times \cdot U$ and ${\tilde{1}_U|}_U= 1_U$ (here $1_U: \Sigma\to R$ is the characteristic function of the compact open subset $U$ of $\Sigma$), is a lattice isomorphism between compact open subsets of $\Sigma^{(0)}$ and $I(B_N)$. By \cite[Th\'{e}or\`{e}me~1]{isophi}, there is an isomorphism $\phi: B_N\to A_R(\Sigma^{(0)})$ satisfying 
 \begin{equation}\label{eq:phi on B}
 \phi(e)= 1_{\mathcal{V}_e} \quad \text{for all} \quad e\in I(B_N)= I(N).
 \end{equation}

\begin{prop}\label{res: varphi-injective}
  Under the hypotheses of \cref{thm:reconstruct},
    for each $a\in A$, define a function $\widehat{a}:\Sigma \to R$ such that for any ultrafilter $U\in \Sigma$ and any $n\in U$, 
    \begin{equation}\label{hata}
    \widehat{a}(U)= \phi(P(n^\dagger a))(\source(U)).
    \end{equation}
    Then the map $a\mapsto \widehat{a}$ from $A$ to $C(\Sigma, R)$ is $R$-linear and injective; moreover, $\widehat{a}$ is $R^{\times}$-contravariant. 
\end{prop}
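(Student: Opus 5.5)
The proof has four parts: well-definedness of $\widehat{a}$, linearity, contravariance, and injectivity.

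\emph{Well-definedness.} We must show that $\phi(P(n^\dagger a))(\source(U))$ does not depend on the choice of $n\in U$. Take $n,m\in U$. Since $U$ is a filter, there is $l\in U$ with $l\le n$ and $l\le m$, so it suffices to treat the case $l\le n$. Write $l=nf$ with $f=l^\dagger l\in I(N)$. Then $l^\dagger=fn^\dagger$, and the bimodule property gives
\[
P(l^\dagger a)=fP(n^\dagger a).
\]
Hence $\phi(P(l^\dagger a))=1_{\mathcal{V}_f}\,\phi(P(n^\dagger a))$. Now $\source(U)=U^{-1}U$ contains $l^\dagger l=f$, so $\source(U)\in\mathcal{V}_f$. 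The two values therefore agree at $\source(U)$.

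\emph{Linearity and contravariance.} $R$-linearity is immediate from the linearity of $P$ and $\phi$ and from evaluation. For contravariance, take $t\in R^\times$ and $n\in U$. Then $tn\in tU$ by (N5), and $(tn)^\dagger=t^{-1}n^\dagger$. Using $\source(tU)=\source(U)$, we get
\[
\widehat{a}(tU)=\phi\bigl(P(t^{-1}n^\dagger a)\bigr)(\source(U))=t^{-1}\widehat{a}(U).
\]
Local constancy of $\widehat{a}$ follows from the same independence: on the basic open set $\mathcal{V}_n$ we have $\widehat{a}(V)=\phi(P(n^\dagger a))(\source(V))$. The map $\source$ is continuous and $\phi(P(n^\dagger a))$ is locally constant on $\Sigma^{(0)}$, so $\widehat{a}$ is locally constant there.

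\emph{Injectivity.} This is the main step. Suppose $a\neq 0$. By \cref{Pfaithful} there is $m\in N$ with $P(ma)\neq 0$. Set $n:=m^\dagger$, so that $P(n^\dagger a)=P(ma)=:b$ is a nonzero element of $B$. Since $\phi$ is an isomorphism, $\phi(b)$ is a nonzero locally constant function on $\Sigma^{(0)}$. Pick $x\in\Sigma^{(0)}$ with $\phi(b)(x)\neq0$. We then need an ultrafilter $U\ni n$ with $\source(U)=x$, and there are two cases.
\begin{enumerate}
\item If $x\in\mathcal{V}_{n^\dagger n}$, then $U:=\mathcal{V}_n\,x$ works. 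This product is defined because $\mathcal{V}_n$ is a bisection with source set $\mathcal{V}_{n^\dagger n}$, and $n\in U$. This gives $\widehat{a}(U)=\phi(b)(x)\neq 0$.
\item We show that $x\in\mathcal{V}_{n^\dagger n}$ is forced. Using \cref{lem:orders} and the bimodule property, $b=P(n^\dagger a)=P(n^\dagger n n^\dagger a)=n^\dagger n\, b$. Applying $\phi$ gives $\phi(b)=1_{\mathcal{V}_{n^\dagger n}}\phi(b)$. So $\phi(b)$ is supported in $\mathcal{V}_{n^\dagger n}$, and $x$ lies there automatically.
\end{enumerate}

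The subtle points are two. First, the ultrafilter groupoid facts: an element of $\mathcal{V}_n$ with prescribed source exists, and $n\in\mathcal{V}_n x$. Both follow from $\mathcal{V}_n$ being a bisection with $\source(\mathcal{V}_n)=\mathcal{V}_{n^\dagger n}$. Second, $\phi(e)=1_{\mathcal{V}_e}$ is used on idempotents in both the well-definedness and the injectivity steps.
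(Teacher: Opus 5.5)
Your proposal is correct and follows the paper's proof essentially step for step. It uses the same filter-refinement argument for well-definedness, the same contravariance computation, and the same injectivity argument via faithfulness of $P$ (\cref{Pfaithful}) together with the observation that $\phi(P(ma))$ is supported in $\mathcal{V}_{mm^\dagger}=\source(\mathcal{V}_{m^\dagger})$. You phrase injectivity contrapositively, whereas the paper shows $\phi(P(na))=0$ for all $n\in N$ by splitting on whether $nn^\dagger\in U$. Your direct local-constancy argument (composing the locally constant function $\phi(P(n^\dagger a))$ with the continuous source map on each $\mathcal{V}_n$) is a valid replacement for the paper's citation of \cite[Proposition~6.1(a)]{ACCCLMRSS}.
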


\begin{proof}
To see that $\widehat{a}$ is a well-defined function satisfying the given formula, we must show that $\phi(P(n^\dagger a))(\source(U))$ is independent of the choice of $n\in U$. Fix $a\in A$, $U\in \Sigma$ and $m, n\in U$. We claim that 
\[
\phi(P(n^\dagger a))(\source(U))- \phi(P(m^\dagger a))(\source(U))= 0.
\]
As $m, n\in U$, there exists $k\in U$ such that $k\leq m, n$, whence $k^\dagger\leq m^\dagger, n^\dagger$. So $k^\dagger k m^\dagger= k^\dagger= k^\dagger k n^\dagger$; moreover, $\phi(k^\dagger k)(\source(U))= 1_{\mathcal{V}_{k^\dagger k}}(U^{-1}U)= 1$. Now using that $\phi$ is multiplicative and that $P$ is a conditional expectation, we have $\phi(P(n^\dagger a))(\source(U))- \phi(P(m^\dagger a))(\source(U))= 0$. 

We show that $\widehat{a}$ is locally constant; for this, the proof of \cite[Proposition~6.1(a)]{ACCCLMRSS} applies.

Since both $P$ and $\phi$ are $R$-linear, the map $a\mapsto \widehat{a}$ is $R$-linear. To show that $a\mapsto \widehat{a}$ is injective: suppose that $\widehat{a}= 0$, then $\widehat{a}(U)= 0$ for all $U\in \Sigma$. It is sufficient to show that $\phi(P(na))= 0$ for all $n\in N$; since $P$ is faithful by \cref{Pfaithful}, we can deduce that $a= 0$. Fix $n\in N$ and $U\in \Sigma^{(0)}$. First, suppose that $nn^\dagger \in U$, then 
\[
U\in \mathcal{V}_{nn^\dagger}= \mathcal{V}_n \mathcal{V}_{n^\dagger}= \source(\mathcal{V}_{n^\dagger}),
\]
and thus we can find an ultrafilter $W\in \mathcal{V}_{n^\dagger}$ such that $\source(W)= U$. We then have
\[
\phi(P(na))(U)= \phi(P(na))(\source(W))= \widehat{a}(W)= 0.
\]
Suppose that $nn^\dagger \notin U$, then $U\notin \mathcal{V}_{nn^\dagger}$. Since $P$ is a conditional expectation, we have
\[
\phi(P(na))(U)= \phi(P(nn^\dagger na))(U)= \phi(nn^\dagger)(U)\phi(P(na))(U)= 1_{\mathcal{V}_{nn^\dagger}}(U)\phi(P(na))(U)= 0,
\]
as needed. Finally, fix $U\in \Sigma$, $t\in R^\times$ and let $n\in U$. Then $tn\in tU$ and $\source(tU)= \source(U)$. Thus, 
\[
\widehat{a}(tU)= \phi(P((tn)^\dagger a))(\source(tU))= \phi(P(t^{-1}n^\dagger a))(\source(U))= t^{-1}\phi(P(n^\dagger a))(\source(U))= t^{-1}\widehat{a}(U),
\]
that is, $\widehat{a}$ is $R^\times$-contravariant. 
\end{proof}

The idempotents inside the inverse semigroup of compact open bisections of $\Sigma$ are the $\mathcal{V}_e$, $e\in I(B_N)$, and are closed under relative complement and disjoint union, i.e., $\mathcal{V}_e \setminus \mathcal{V}_f= \mathcal{V}_{e- ef}$ for any $e, f\in I(B_N)$ and $\mathcal{V}_e \cap \mathcal{V}_f= \emptyset$ if $ef= 0$. As a consequence, for any compact open bisection $V$, by \cite[Lemma 6.4]{ACCCLMRSS} we have $V= \bigsqcup_{i=1}^{k} \mathcal{V}_{n_i}$, where $\mathcal{V}_{n_i}\subseteq V$, for some $k\in \mathbb N$ and $n_i\in N$. 
Consequently, we have a similar proposition as \cite[Proposition 6.5]{ACCCLMRSS}.

\begin{prop}\label{prop:decom1}
    For any $f\in A_R(G;\Sigma)$, there exist $n_1, \ldots, n_k\in N$ and $t_1, \ldots, t_k\in R$ such that $f= \sum_{i=1}^{k} t_i \widehat{n}_i$. In particular, for any compact open bisection $V$ of $\Sigma$, we have $\tilde{1}_V= \sum_{i=1}^k \widehat{n}_i$ for some $n_1, \ldots, n_k\in N$.
\end{prop}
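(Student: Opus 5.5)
The plan is to reduce the statement to the special case $f=\tilde{1}_V$, which is already flagged as the ``in particular'' part. Since $A_R(G;\Sigma)=\ls\{\tilde{1}_V: V \text{ a compact open bisection of } \Sigma\}$, once every $\tilde{1}_V$ is a finite sum of elements $\widehat{n}$, an arbitrary $f$ is an $R$-linear combination of such sums. This yields scalars $t_i\in R$ and $n_i\in N$ with $f=\sum t_i\widehat{n}_i$.

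For the special case, fix a compact open bisection $V$ of $\Sigma$. By the remark preceding the statement (via \cite[Lemma 6.4]{ACCCLMRSS}) we can write
\[
V=\bigsqcup_{i=1}^k \mathcal{V}_{n_i}, \qquad \mathcal{V}_{n_i}\subseteq V, \ n_i\in N,
\]
with the union disjoint. Because $V$ is a bisection, no two distinct points of $V$ lie in the same $R^\times$-orbit: if $U$ and $tU$ are both in $V$ then $\source(tU)=\source(U)$ forces $tU=U$. Hence the sets $R^\times\cdot\mathcal{V}_{n_i}$ are pairwise disjoint, and $\tilde{1}_V=\sum_{i=1}^k\tilde{1}_{\mathcal{V}_{n_i}}$ pointwise on $\Sigma$. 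It therefore suffices to prove $\widehat{n}=\tilde{1}_{\mathcal{V}_n}$ for every $n\in N$.

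The key step, and the main obstacle, is this identity $\widehat{n}=\tilde{1}_{\mathcal{V}_n}$, which amounts to computing $\widehat{n}(U)=\phi(P(m^\dagger n))(\source(U))$ for $m\in U$. I would split into three cases.

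\emph{Case $n\in U$.} Take $m=n$. Then $P(n^\dagger n)=n^\dagger n$, and $\phi(n^\dagger n)(\source(U))=1_{\mathcal{V}_{n^\dagger n}}(U^{-1}U)=1$.

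\emph{Case $U=tW$ with $W\in\mathcal{V}_n$ and $t\in R^\times$.} Contravariance from \cref{res: varphi-injective} gives $\widehat{n}(U)=t^{-1}$.

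\emph{Case $U\notin R^\times\cdot\mathcal{V}_n$.} Here we must show $\phi(P(m^\dagger n))(\source(U))=0$ for some $m\in U$. By \cref{equistat:ibi} (equivalently \cref{prop:N-in-normalisers}), $P(m^\dagger n)=e\,m^\dagger n$ for some idempotent $e\in I(N)$, and this element lies in $B_N\cap N$, hence is an idempotent. Suppose $\phi$ of it were nonzero at $\source(U)$. Then $f:=P(m^\dagger n)\in\source(U)$, so $mf\in U$. Since $mf=m\,e\,m^\dagger n\le n$ up to an idempotent, we get $mf\le t n$ for a suitable scalar; more precisely, the argument of \cite[Proposition~6.1]{ACCCLMRSS} shows that $U$ then contains $n$ times a unit, giving $U\in R^\times\cdot\mathcal{V}_n$, a contradiction.

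I expect the bookkeeping in this last case to be the delicate part. I would verify directly that $f=e m^\dagger n$ idempotent gives $mf = m m^\dagger n f'$ for an idempotent $f'$, so that $mf\le n$ and hence $n\in U$ by upclosure.
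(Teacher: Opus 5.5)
Your route is the paper's: reduce to $\tilde{1}_V$, decompose $V=\bigsqcup_i\mathcal{V}_{n_i}$ by \cite[Lemma~6.4]{ACCCLMRSS}, and use $\widehat{n}=\tilde{1}_{\mathcal{V}_n}$. The paper verifies that identity inside the proof of \cref{thm:reconstruct}, but only asserts the case $U\notin R^\times\cdot\mathcal{V}_n$. Your orbit-disjointness argument for $\tilde{1}_V=\sum_i\tilde{1}_{\mathcal{V}_{n_i}}$ is correct, and so are your first two cases.

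The gap is in your third case. The claim that $P(m^\dagger n)$, lying in $B_N\cap N$, ``hence is an idempotent'' is false. By (N5), $B_N\cap N$ contains $te$ for every $t\in R^\times$ and $e\in I(N)$. Concretely, if $tn\in U$ and you choose $m=tn\in U$, then $P(m^\dagger n)=t^{-1}n^\dagger n$. So a nonzero value $\phi(P(m^\dagger n))(\source(U))$ does not give $P(m^\dagger n)\in\source(U)$. The conclusion $n\in U$ that your direct verification aims at is also the wrong target: you must show $sn\in U$ for some $s\in R^\times$, which means proving the nonzero value is a unit. Your pointer to \cite[Proposition~6.1]{ACCCLMRSS} does not supply that argument, and the verification you sketch rests on the false idempotency claim.

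The case can be repaired with results already in the paper. Let $b:=P(m^\dagger n)$ and $y:=\source(U)$, and suppose $r:=\phi(b)(y)\neq 0$.
\begin{enumerate}
\item \emph{$r$ is a unit.} By \cref{prop:N-in-normalisers}\ref{it:N38-2}, $b\in N$ and $b^\dagger=P(n^\dagger m)\in B_N$, and $bb^\dagger\in I(N)$. Put $u:=\phi(b^\dagger)(y)$. Then $ru=\phi(bb^\dagger)(y)\in\{0,1\}$ and $r=\phi(bb^\dagger b)(y)=rur$. If $ru=0$ then $r=0$, so $ru=1$ and $r\in R^\times$.
\item \emph{Localise.} Since $\phi(b)$ is locally constant, there is $g\in I(N)$ with $g\in y$ and $\phi(b)$ constantly equal to $r$ on $\mathcal{V}_g$. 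Injectivity of $\phi$ then gives $bg=rg$.
\item \emph{Compare with $n$.} By \cref{equistat:ibi}, $b=e\,m^\dagger n$ for some $e\in I(N)$. Hence $r\,mg=mbg=(mem^\dagger)\,n\,g\le n$ by \cref{lem:orders}. Also $mg\in U$, because $m\in U$ and $g\in\source(U)$.
\item \emph{Conclude.} Thus $r\,mg\in rU$, and upward closure of the ultrafilter $rU$ gives $n\in rU$. So $U=r^{-1}(rU)\in R^\times\cdot\mathcal{V}_n$, contradicting the case hypothesis.
\end{enumerate}
With this in place of your idempotency argument, your proof is complete.
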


\begin{proof}[Proof of \cref{thm:reconstruct}] 
Define $\varphi: A \to C(\Sigma, R)$ by $ a\mapsto \widehat{a}$. We begin by showing that $\widehat{a} \in A_R(G; \Sigma)$ for every $a\in A$; however, as $\varphi$ is $R$-linear and $N$ spans $A$, it suffices to show that $\widehat{n}\in A_R(G;\Sigma)$ for all $n\in N$. Take $U\in \Sigma$, if $U\in \mathcal{V}_{tn}$ for some $t\in R^\times$, then
\[
\widehat{n}(U)= \phi(P(t^{-1}n^\dagger n))(\source(U))= \phi(t^{-1}n^\dagger n)(\source(U))= t^{-1}\phi(n^\dagger n)(\source(U)).
\]
Since $\phi: B_N \to A_R(\Sigma^{(0)})$ is an isomorphism satisfying \eqref{eq:phi on B}, $\widehat{n}(U)= t^{-1}1_{\mathcal{V}_{n^\dagger n}}(\source(U))= t^{-1}$ as $n^\dagger n\in \source(U)$. Otherwise, $\widehat{n}(U)= 0$, implying that $\widehat{n}= \tilde{1}_{\mathcal{V}_n}\in A_R(G;\Sigma)$, as needed.

\cref{res: varphi-injective} shows that $\varphi$ is an injective $R$-linear map.  \cref{prop:decom1} says that $\tilde{1}_{V}$ is in the image of $\varphi$ for every compact open bisection $V$ of $\Sigma$ and hence $\varphi$ is surjective.  

To see that $\varphi$ is an isomorphism,  we have to show that $\widehat{a a'}= \widehat{a}\ast \widehat{a'}$ for all $a, a'\in A$; however, since $A$ is the $R$-linear span of $N$, it suffices to show that $\widehat{nm}= \widehat{n}\ast \widehat{m}$ for any $n, m\in N$. But this follows from the fact that $\widehat{n}=\til_{\V_n}$ and that $\mathcal{V}_n \mathcal{V}_m= \mathcal{V}_{nm}$.

To check the statement for $B_N$: if $e\in I(B_N)= I(N)$, then $\varphi(e)= \widehat{e}= \tilde{1}_{\mathcal{V}_e}$. Since $I(B_N)$ spans $B_N$ by \cref{res: scB}, and $\{\tilde{1}_{\mathcal{V}_e}: e\in I(B_N)\}$ spans $A_R(G^{(0)};q^{-1}(G^{(0)}))$ which is isomorphic to $A_R(\Sigma^{(0)})$ and to $A_R(G^{(0)})$, it follows that $\varphi$ maps $B_N$ isomorphically to $A_R(\Sigma^{(0)})$.   
 
Now, suppose that $N$ is summable. To see that $\varphi(N)= N_0$, first note that for any $n\in N$, $\widehat{n}= \tilde{1}_{\mathcal{V}_n}\in N_0$. For the inclusion $N_0\subseteq \varphi(N)$: Fix a compact open bisection $V$. Then $V=\bigsqcup_{i=1}^{k} \mathcal{V}_{n_i}$, where $\mathcal{V}_{n_i}\subseteq V$, for some $k\in \mathbb N$ and $n_i\in N$. By \cref{prop:decom1}, we have $\tilde{1}_V= \sum_{i=1}^{k} \tilde{1}_{\mathcal{V}_{n_i}}$.
As $\mathcal{V}_{n_i^\dagger n_j}= \mathcal{V}^{-1}_{n_i} \mathcal{V}_{n_j}\subseteq V^{-1}V\subseteq \Sigma^{(0)}$, thus, $\varphi(n_i^\dagger n_j)\in \varphi(B_N)$. Since $\varphi$ is injective, we have $n_i^\dagger n_j\in B_N$. Similarly, $n_i n_j^\dagger\in B_N$. Since $N$ is summable, we have $n_i+ n_j\in N$ for all $1\leq i, j\leq k$; consequently, $\sum_{i=1}^k n_i\in N$. Hence, $\tilde{1}_V= \sum_{i=1}^k \tilde{1}_{\mathcal{V}_{n_i}}= \varphi(\sum_{i=1}^k n_i)\in \varphi(N)$, completing the proof.
\end{proof}

\section{Recovering the twist}\label{sec:recovery}

Let $\Sigma \twoheadrightarrow G$ be a discrete $R$-twist over an ample Hausdorff groupoid $G$ where $R$ is an indecomposable commutative unital ring. In the next result, we show that the groupoid $\Sigma_{N_0}$ associated to the Cartan inverse semigroup $N_0= \{\widetilde{1}_X : X\in\cob\}\subseteq A_R(G;\Sigma)$, where $\cob$ denotes the collection of compact open bisections of $\Sigma$, recovers $\Sigma$.

\begin{thm}\label{thm: recovering}
Let $q\colon\Sigma \twoheadrightarrow G$ be a discrete $R$-twist over an ample Hausdorff groupoid $G$ where $R$ is an indecomposable commutative unital ring. Let $A= A_R(G;\Sigma)$, $B=A_R(G^{(0)})$, and consider the Cartan inverse semigroup $N_0\subseteq A$. Then for each $\sigma\in \Sigma$, the set 
\[
U_\sigma:= \{\widetilde{1}_X\in N_0 : X\in\cob,\, \sigma\in X\}
\]
is an ultrafilter of $N_0$, and the map $\Phi\colon \Sigma\to \Sigma_{N_0}$ given by $\Phi(\sigma)=U_\sigma$ is an $R^\times$-equivariant isomorphism of topological groupoids.
\end{thm}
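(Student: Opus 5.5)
The strategy is to transfer everything to the inverse semigroup $\cob$ of compact open bisections of $\Sigma$. First check that $X\mapsto \til_X$ is an isomorphism of inverse semigroups $\cob\to N_0$ preserving the natural partial order. Multiplicativity and inverses hold since $\til_U*\til_V=\til_{UV}$ and $\til_V^\dagger=\til_{V^{-1}}$. Injectivity uses that $\spp(\til_X)=R^\times\cdot X$ and $\til_X|_X=1$. Indeed, if $\til_X=\til_Y$, then for $\sigma\in X$ we have $\sigma\in t\cdot Y$ with $t^{-1}=1$, so $X\subseteq Y$, and symmetrically $Y\subseteq X$. Consequently $\til_X\le \til_Y$ if and only if $X=XX^{-1}Y$, that is, $X\subseteq Y$; note that idempotent bisections are subsets of $\Sigma^{(0)}$. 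Under this identification, $U_\sigma$ corresponds to $\{X\in\cob:\sigma\in X\}$, and the map $\til_X\mapsto \til_{t\cdot X}=t\til_X$ corresponds to the $R^\times$-action.

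Next I would show that $U_\sigma$ is an ultrafilter. It is a filter because it is upward closed, excludes $0=\til_\emptyset$, and contains $\til_{X\cap Y}\le \til_X,\til_Y$; here $X\cap Y$ is compact open because $\Sigma$ is Hausdorff by \cite[Corollary~2.3]{ACCCLMRSS} and ample. For maximality, suppose $\til_Y$ meets every element of $U_\sigma$ nontrivially, meaning $Y\cap X\neq\emptyset$ for all $X\ni\sigma$. If $\sigma\notin Y$, then since $Y$ is compact, hence closed, in Hausdorff $\Sigma$, some compact open bisection $X\ni\sigma$ is disjoint from $Y$, a contradiction. The same neighbourhood argument gives the converse: every ultrafilter $W$ of $N_0$ equals some $U_\sigma$. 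The sets $X$ with $\til_X\in W$ have the finite intersection property. Fixing one of them, $X_0$, which is compact, the intersection of closed compact sets $\bigcap_{\til_X\in W}X$ is nonempty, so it contains some $\sigma$. Then $W\subseteq U_\sigma$, and maximality gives $W=U_\sigma$. Injectivity of $\Phi$ follows from Hausdorffness, since distinct points are separated by compact open bisections.

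For the groupoid structure, $U_\sigma^{-1}=U_{\sigma^{-1}}$ is immediate. For composition, I would show $U_\sigma U_\nu\subseteq U_{\sigma\nu}$ when $(\sigma,\nu)\in\Sigma^{(2)}$, since $\sigma\nu\in XY$. Because $U_\sigma U_\nu$ is a filter and $U_{\sigma\nu}$ is an ultrafilter, they are equal once one knows that $U_\sigma U_\nu$ is an ultrafilter, which holds by the groupoid structure on $\Sigma_{N_0}$. Composability also matches: $\source(U_\sigma)=U_{\source(\sigma)}$, since $X^{-1}X\ni\source(\sigma)$. Hence $\source(U_\sigma)=\range(U_\nu)$ if and only if $\source(\sigma)=\range(\nu)$, using injectivity. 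For the topology, $\Phi^{-1}(\V_{\til_X})=X$, and the $\V_n$ form a basis, so $\Phi$ is continuous. Its image of the basis $\cob$ is the basis $\{\V_{\til_X}\}$, so it is open; thus $\Phi$ is a homeomorphism. Equivariance is $U_{t\cdot\sigma}=\{\til_{t\cdot X}:\sigma\in X\}=tU_\sigma$.

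I expect the main obstacle to be the surjectivity and ultrafilter verification. That step requires care that the order on $N_0$ really is inclusion of bisections, which is where the contravariance and the indecomposability and injectivity of $X\mapsto\til_X$ enter. It also requires the use of Hausdorffness to separate points and to obtain a point in the intersection of $W$.
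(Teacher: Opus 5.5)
Your proposal is correct and follows essentially the same route as the paper: you show $U_\sigma$ is an ultrafilter by separating $\sigma$ from a compact open bisection with a disjoint one (Hausdorffness), get surjectivity from the finite intersection property inside a fixed compact $X_0$, identify the groupoid operations by a filter containment that maximality upgrades to equality, and get continuity and openness from $\Phi^{-1}(\V_{\til_X})=X$. The only difference is that you verify directly that $X\mapsto\til_X$ is an order isomorphism $\cob\to N_0$, whereas the paper quotes this from \cite[Lemma~8.2(c)]{ACCCLMRSS}.
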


\begin{proof}
We start by showing that $U_\sigma$ is an ultrafilter of $N_0$. We use \cite[Lemma~8.2(c)]{ACCCLMRSS} which says that for $X,Y\in\cob$ we have $\widetilde{1}_X\le \widetilde{1}_Y\iff X\subseteq Y$. So if $\widetilde{1}_X\in U_\sigma$ and $\widetilde{1}_Y\in N_0$ with $\widetilde{1}_X\le \widetilde{1}_Y$, then $X\subseteq Y$. Hence $\sigma\in Y$, and so $\widetilde{1}_Y\in U_\sigma$, and we see that $U_\sigma$ is upwards closed. If $\widetilde{1}_X,\widetilde{1}_Y\in U_\sigma$, then $\sigma \in X\cap Y$, and so $\widetilde{1}_{X\cap Y}\in U_\sigma$ with $\widetilde{1}_{X\cap Y}\le \widetilde{1}_X$ and $\widetilde{1}_{X\cap Y}\le \widetilde{1}_Y$. Hence $U_\sigma$ is downwards directed. To see that $U_\sigma$ is maximal, suppose towards a contradiction that $\mathcal{F}\subseteq N_0\setminus\{0\}$ is a filter that properly contains $U_\sigma$. Then there is $Y\in\cob$ such that $\widetilde{1}_Y\in\mathcal{F}\setminus U_\sigma$, and so $\sigma\notin Y$. Since $\Sigma$ is Hausdorff, we can choose $X\in\cob$ with $\sigma \in X$ and $X\cap Y=\emptyset$. Then $\widetilde{1}_X\in U_\sigma\subseteq \mathcal{F}$, and hence there must be some $W\in\cob$ with $\widetilde{1}_W\le \widetilde{1}_X,\widetilde{1}_Y$. But this forces $W\subseteq X\cap Y=\emptyset$, which means $0=\widetilde{1}_W\in \mathcal{F}$. This is a contradiction, and hence we must have $\mathcal{F}=U_\sigma$. We see that $U_\sigma$ is maximal, hence is an ultrafilter.

We define $\Phi\colon \Sigma\to \Sigma_{N_0}$ by $\Phi(\sigma)=U_\sigma$. We first claim that for each $\sigma\in\Sigma$, we have 
\[
\range_{N_0}(\Phi(\sigma))= \Phi(\range(\sigma))\quad\text{and}\quad \source_{N_0}(\Phi(\sigma))= \Phi(\source(\sigma)),
\]
where $\range_{N_0}$ and $\source_{N_0}$ are the range and source maps in $\Sigma_{N_0}$, respectively.
First note that 
\[
\range_{N_0}(\Phi(\sigma))=\{\widetilde{1}_X\widetilde{1}_Y^\dagger : X,Y\in\cob,\,\sigma\in X\cap Y\}\quad\text{and}\quad \Phi(\range(\sigma))=\{\widetilde{1}_W : W\in\cob,\, \range(\sigma)\in W\}.
\]
If $\sigma\in X\cap Y$, then $\range(\sigma)=\sigma\sigma^{-1}\in XY^{-1}$, and so $\widetilde{1}_X\widetilde{1}_Y^\dagger=\widetilde{1}_{XY^{-1}}\in \Phi(\range(\sigma))$, and hence $\range_{N_0}(\Phi(\sigma))\subseteq \Phi(\range(\sigma))$. Since both sets are ultrafilters, we get equality. A similar argument gives $\source_{N_0}(\Phi(\sigma))=\Phi(\source(\sigma))$, and the claim holds.

To see that $\Phi$ is multiplicative, suppose $\sigma,\tau\in\Sigma$ are composable. We claim that $\Phi(\sigma\tau)=\Phi(\sigma)\Phi(\tau)$. We have
\[
\Phi(\sigma)\Phi(\tau)=\{\widetilde{1}_X\widetilde{1}_Y: X,Y\in\cob,\, \sigma\in X,\,\tau\in Y\}.
\]
If $X,Y\in\cob$ with $\sigma\in X,\,\tau\in Y$, then  $\sigma\tau\in XY$ and hence $\widetilde{1}_X\widetilde{1}_Y=\widetilde{1}_{XY}\in \Phi(\sigma\tau)$. Hence $\Phi(\sigma)\Phi(\tau)\subseteq \Phi(\sigma\tau)$, and since both are ultrafilters, we have equality. So $\Phi$ is multiplicative. 

To see that $\Phi$ is injective, let $\sigma,\tau\in\Sigma$ with $\sigma\not=\tau$. Then for $X\in\cob$ with $\sigma\in X$ and $\tau\notin X$ we have $\widetilde{1}_X\in \Phi(\sigma)$ and $\widetilde{1}_X\notin \Phi(\tau)$. Hence $\Phi(\sigma)\not=\Phi(\tau)$. 

For the $R^\times$-equivariance, first note that for $t\in R^\times$ and $X\in \cob$ we have $t\widetilde{1}_X=\widetilde{1}_{t\cdot X}$. So for $\sigma\in\Sigma$ we have
\[
t\cdot \Phi(\sigma)=\{\widetilde{1}_{t\cdot X} : X\in\cob,\,\sigma\in X\}\quad\text{and}\quad \Phi(t\cdot\sigma)=\{\widetilde{1}_X : X\in\cob,\, t\cdot\sigma\in X\}.
\]
Then $\sigma\in X\implies t\cdot\sigma\in t\cdot X\implies \widetilde{1}_{t\cdot X}\in\Phi(t\cdot\sigma)$. So $t\cdot\Phi(\sigma)\subseteq \Phi(t\cdot \sigma)$, and since they are both ultrafilters, we get equality.

We show that $\Phi$ is surjective. Let $U\in\Sigma_{N_{0}}$ and set $\mathcal{X}:=\{X\in\Sigma_{\mathrm{cob}} : \widetilde{1}_{X}\in U\}$. Since $ U\subseteq N_{0}\setminus\{0\}$ and $\widetilde{1}_{\emptyset}=0$, each $X\in\mathcal{X}$ is a nonempty compact subset of $\Sigma$. Moreover $\mathcal{X}$ is downward directed: if $\widetilde{1}_{X},\widetilde{1}_{Y}\in  U$ then, as $ U$ is a filter, there is
$\widetilde{1}_{W}\in U$ with $\widetilde{1}_{W}\leq\widetilde{1}_{X}, \widetilde{1}_{Y}$, and hence $W\subseteq X\cap Y$.

Fix $X_{0}\in\mathcal{X}$. Since $\Sigma$ is Hausdorff, every $X\in\mathcal{X}$ is closed, so $\{X\cap X_{0} : X\in\mathcal{X}\}$ is a family of closed subsets of the compact set $X_{0}$, and it has the finite intersection property because $\mathcal{X}$ is downward directed. Hence $\bigcap\mathcal{X}\neq\emptyset$. Choose $\sigma\in\bigcap\mathcal{X}$, and let $X\in\Sigma_{\mathrm{cob}}$ with $\widetilde{1}_{X}\in U$. Then $X\in\mathcal{X}$, so $\sigma\in X$ and hence $\widetilde{1}_{X}\in U_{\sigma}$. Since every element of $U$ is of the form $\widetilde{1}_{X}$ for some $X\in\Sigma_{\mathrm{cob}}$, this shows $U\subseteq U_{\sigma}$. As $U$ is an ultrafilter, we conclude $U=U_{\sigma}=\Phi(\sigma)$. So $\Phi$ is surjective.

Finally, for continuity and openness of $\Phi$ we use the basis elements $\mathcal{V}_X:=\{U\in\Sigma_{N_0} : \widetilde{1}_X\in U\}$ for $X\in \cob$. Since
\[
\Phi^{-1}(\mathcal{V}_X)=\{\sigma\in\Sigma : \widetilde{1}_X\in U_\sigma\}=\{\sigma\in \Sigma : \sigma \in X\}=X,
\]
it follows that $\Phi$ is continuous and open. 
\end{proof}

\vspace{.5cm}

\printbibliography

\end{document}

\typeout{get arXiv to do 4 passes: Label(s) may have changed. Rerun}